\def\YEAR{\year}\newcount\VOL\VOL=\YEAR\advance\VOL by-1995
\def\firstpage{1}\def\lastpage{1000}
\def\received{}\def\revised{}
\def\communicated{}
\def\magnification{\afterassignment\m@g\count@}
\def\m@g{\mag=\count@\hsize6.5truein\vsize8.9truein\dimen\footins8truein}
\font\eightrm=cmr8
\font\caps=cmcsc10                    
\font\Caps=cmcsc10 scaled \magstep1   
\def\bfseries{\normalsize\caps}
\def\DocMath{}
\renewcommand{\@evenhead}{%
    \ifnum\thepage>\lastpage\rlap{\thepage}\hfill%
    \else\rlap{\thepage}\slshape\leftmark\hfill{\caps\SAuthor}\hfill\fi}%
\renewcommand{\@oddhead}{%
    \ifnum\thepage=\firstpage{\DocMath\hfill\llap{\thepage}}%
    \else{\slshape\rightmark}\hfill{\caps\STitle}\hfill\llap{\thepage}\fi}%
\def\TSkip{\bigskip}
\newbox\TheTitle{\obeylines\gdef\GetTitle #1
\ShortTitle  #2
\SubTitle    #3
\Author      #4
\ShortAuthor #5
\EndTitle
{\setbox\TheTitle=\vbox{\baselineskip=20pt\let\par=\cr\obeylines%
\halign{\centerline{\Caps##}\cr\noalign{\medskip}\cr#1\cr}}%
	\copy\TheTitle\TSkip\TSkip%
\def\next{#2}\ifx\next\empty\gdef\STitle{#1}\else\gdef\STitle{#2}\fi%
\def\next{#3}\ifx\next\empty%
    \else\setbox\TheTitle=\vbox{\baselineskip=20pt\let\par=\cr\obeylines%
    \halign{\centerline{\caps##} #3\cr}}\copy\TheTitle\TSkip\TSkip\fi%
\centerline{\caps #4}\TSkip\TSkip%
\def\next{#5}\ifx\next\empty\gdef\SAuthor{#4}\else\gdef\SAuthor{#5}\fi%
\ifx\received\empty\relax
    \else\centerline{\eightrm Received: \received}\fi%
\ifx\revised\empty\TSkip%
    \else\centerline{\eightrm Revised: \revised}\TSkip\fi%
\ifx\communicated\empty\relax
    \else\centerline{\eightrm Communicated by \communicated}\fi\TSkip\TSkip%
\catcode'015=5}}\def\Title{\obeylines\GetTitle}
\def\Abstract{\begingroup\narrower
    \parskip=\medskipamount\parindent=0pt{\caps Abstract. }}
\def\EndAbstract{\par\endgroup\TSkip}
\long\def\MSC#1\EndMSC{\def\arg{#1}\ifx\arg\empty\relax\else
     {\par\narrower\noindent%
     2000 Mathematics Subject Classification: #1\par}\fi}
\long\def\KEY#1\EndKEY{\def\arg{#1}\ifx\arg\empty\relax\else
	{\par\narrower\noindent Keywords and Phrases: #1\par}\fi\TSkip}
\newbox\TheAdd\def\Addresses{\vfill\copy\TheAdd\vfill
    \ifodd\number\lastpage\vfill\eject\phantom{.}\vfill\eject\fi}
{\obeylines\gdef\GetAddress #1
\Address #2 
\Address #3
\Address #4
\EndAddress
{\def\xs{4.3truecm}\parindent=0pt
\setbox0=\vtop{{\obeylines\hsize=\xs#1\par}}\def\next{#2}
\ifx\next\empty 
     \setbox\TheAdd=\hbox to\hsize{\hfill\copy0\hfill}
\else\setbox1=\vtop{{\obeylines\hsize=\xs#2\par}}\def\next{#3}
\ifx\next\empty 
     \setbox\TheAdd=\hbox to\hsize{\hfill\copy0\hfill\copy1\hfill}
\else\setbox2=\vtop{{\obeylines\hsize=\xs#3\par}}\def\next{#4}
\ifx\next\empty\ 
     \setbox\TheAdd=\vtop{\hbox to\hsize{\hfill\copy0\hfill\copy1\hfill}
                \vskip20pt\hbox to\hsize{\hfill\copy2\hfill}}
\else\setbox3=\vtop{{\obeylines\hsize=\xs#4\par}}
     \setbox\TheAdd=\vtop{\hbox to\hsize{\hfill\copy0\hfill\copy1\hfill}
	        \vskip20pt\hbox to\hsize{\hfill\copy2\hfill\copy3\hfill}}
\fi\fi\fi\catcode'015=5}}\gdef\Address{\obeylines\GetAddress}
\newtheorem{thm}{Theorem}[section]
\newtheorem{definition}[thm]{Definition}
\newtheorem{lem}[thm]{Lemma}
\newtheorem{cor}[thm]{Corollary}
\newtheorem{prop}[thm]{Proposition}
\newtheoremstyle{example}
     {3 pt}
     {3 pt}
     {}          
     {}          
     {\bfseries} 
     {.}          
     {.5em}       
     {
     }
\theoremstyle{example}
\newtheorem{example}[thm]{Example}
\newtheorem{remark}[thm]{Remark}
\newtheorem*{cond}{Condition}
\DeclareMathOperator{\Id}{Id}
\newcommand{\N}{\mathbb{N}}
\newcommand{\Z}{\mathbb{Z}}
\newcommand{\subsubsubsection}[1]{\textsc{#1}.}
\renewcommand{\AA}{\mathcal{A}}
\newcommand{\BB}{\mathcal{B}}
\newcommand{\CC}{\mathcal{C}}
\newcommand{\DD}{\mathcal{D}}
\newcommand{\FF}{\mathcal{F}}
\newcommand{\LL}{\mathcal{L}}
\newcommand{\OO}{\mathcal{O}}
\newcommand{\PP}{\mathcal{P}}
\newcommand{\EE}{\mathcal{E}}
\newcommand{\GG}{\mathcal{G}}
\newcommand{\HH}{\mathcal{H}}
\newcommand{\X}{\mathsf{X}}
\newcommand{\psc}{W}
\renewcommand{\frac}[2]{#1 / #2}
\newcommand{\class}[1]{[#1]}
\newcommand{\Cs}{C^\ast}
\begin{document}
\Title
{On the Structure of Covers of Sofic Shifts}
\ShortTitle 
\SubTitle   
\Author 
{Rune Johansen}
\ShortAuthor 
\EndTitle
\Abstract 
A canonical cover generalizing the left Fischer cover to arbitrary sofic
shifts is introduced and used to prove that the left Krieger cover
and the past set cover of a sofic shift can be divided into
natural layers.
These results are used to
find the range of a flow-invariant
and to
investigate the ideal structure of the
universal $\Cs$-algebra associated to a sofic shift space. 
\EndAbstract
\MSC 
{Primary: 37B10; Secondary: 46L05} 
\EndMSC
\KEY 
{shift space, sofic shift, Krieger cover, Fischer cover}     
\EndKEY
\Address 
Rune Johansen
Department of \\ Mathematical Sciences
University of Copenhagen
Universitetsparken 5
2100 K\o benhavn \O{}
Denmark
\Address
\Address
\Address
\EndAddress

\section{Introduction}
\label{sec_introduction} 
Shifts of finite type have been completely classified up to flow
equi\-valence by Boyle and Huang \cite{boyle_huang}, but very little is
known about the classification of the class of sofic shift spaces
introduced by Weiss \cite{weiss} even though they are a natural first 
generalization of shifts of finite type. 
The purpose of this paper is to investigate the structure of -
and relationships between - various standard presentations (the Fischer
cover, the Krieger cover, and the past set cover) of
sofic shift spaces.
These results are used to find the range of the
flow-invariant introduced in \cite{bates_eilers_pask}, and to 
investigate the ideal structure of the $\Cs$-algebras associated to
sofic shifts.
In this way, the present paper can be seen as a continuation of
the strategy applied in
\cite{carlsen_eilers_ergod_2004,carlsen_eilers_doc_2004,matsumoto_2001},
where invariants for shift spaces are extracted from the associated
$\Cs$-algebras.  

Section \ref{sec_background} recalls the definitions of shift spaces,
labelled graphs, and covers to make the paper self contained.
Section \ref{sec_gfc} introduces a canonical and flow-invariant cover generalizing the left Fischer cover to arbitrary sofic shifts.

Section \ref{sec_foundation} introduces the concept of a foundation of a cover, which is used to prove that the left Krieger cover and the past set cover can be divided into natural layers and to show that the left Krieger cover of an arbitrary sofic shift can be identified with a subgraph of the past set cover.

In Section \ref{sec_invariant}, the structure of the layers of the left Krieger cover of an irreducible sofic shift is used to find the range of the flow-invariant introduced in \cite{bates_eilers_pask}. 
Section \ref{sec_cs} uses the results about the structure of covers of sofic shifts to investigate ideal lattices of the associated $\Cs$-algebras. Additionally, it is proved that 
Condition $(\ast)$ introduced by Carlsen and Matsumoto \cite{carlsen_matsumoto} holds if and
only if the left Krieger cover is the maximal essential subgraph of
the past set cover.

\subsubsubsection{Acknowledgements}
This work was supported by the Danish National Research Foundation (DNRF) through the Centre for Symmetry and Deformation.
The author would like to thank David Pask, Toke Meier Carlsen, and S\o
ren Eilers for interesting discussions and helpful comments. The author would also like to thank the anonymous referee for useful comments improving the exposition and to thank the University of Wollongong,
Australia and the University of Tokyo, Japan where parts of the
research for this paper were carried out during visits funded by
\emph{Rejselegat for Matematikere}. 

\section{Background}
\label{sec_background}
\subsubsubsection{Shift spaces}
Here, a short introduction to the definition and properties of shift
spaces is given to make the present paper self-contained; 
for a thorough treatment of shift spaces see \cite{lind_marcus}. 
Let $\AA$ be a finite set with the discrete topology. The
\emph{full shift} over $\AA$ consists of the space $\AA^\Z$ endowed
with the product topology and the \emph{shift map} $\sigma \colon
\AA^\Z \to \AA^\Z$ defined  by $\sigma(x)_i = x_{i+1}$ for all $i \in
\Z$. Let $\AA^*$ be the collection of finite words (also known as
blocks) over $\AA$. A
subset $X \subseteq \AA^\Z$ is called a \emph{shift space} if it
is invariant under the shift map and closed. 
For each $\FF \subseteq \AA^*$, define  $\X_\FF$ to be the set of
bi-infinite sequences in $\AA^\Z$ which do not contain any of the
\emph{forbidden words} from $\FF$. 
A subset $X \subseteq \AA^\Z$ is a shift space if and only if there
exists $\FF \subseteq \AA^*$ such that $X = \X_\FF$
(cf. \cite[Proposition 1.3.4]{lind_marcus}). $X$ is said to be 
a \emph{shift of finite type} (SFT) if this is possible for a finite
set $\FF$.

The \emph{language} of a shift space $X$ is defined to be the set of all words which occur in at least one $x \in X$, and it is denoted $\BB(X)$.
$X$ is said to be
\emph{irreducible} if there for every $u,w \in \BB(X)$ exists $v \in
\BB(X)$ such that $uvw \in \BB(X)$. 
For each $x \in X$, define the \emph{left-ray} of $x$ to be $x^- = \cdots
x_{-2} x_{-1}$ and define the \emph{right-ray} of $x$ to be $x^+ = x_0
x_1 x_2 \cdots$. The sets of all left-rays and all right-rays are,
respectively, denoted  $X^-$ and $X^+$.

A bijective, continuous, and shift commuting map between two shift
spaces is called a \emph{conjugacy}, and when such a map exists, 
the two shift spaces are said to be \emph{conjugate}. \emph{Flow
  equivalence} is a weaker equivalence relation generated by conjugacy
and \emph{symbol expansion} \cite{parry_sullivan}.

\subsubsubsection{Graphs}
For countable sets $E^0$ and $E^1$, and maps $r,s \colon E^1 \to E^0$
the quadruple $E = (E^0,E^1,r,s)$ is called a \emph{directed graph}. The
elements of $E^0$ and $E^1$ are, respectively, the vertices and the
edges of the graph.
For each edge $e \in E^1$, $s(e)$ is the vertex where $e$ starts,
and $r(e)$ is the vertex where $e$ ends. A \emph{path} $\lambda = e_1 \cdots
e_n$ is a sequence of edges such that $r(e_i) = s(e_{i+1})$ for all $i
\in \{1, \ldots n-1 \}$.
For each $n \in \N_0$, the set of paths of length $n$ is denoted
$E^n$, and the set of all finite paths is denoted $E^*$.
Extend
the maps $r$ and $s$ to $E^*$ by defining $s(e_1 \cdots e_n) = s(e_1)$
and $r(e_1 \cdots e_n) = r(e_n)$. A \emph{circuit} is a path $\lambda$
with $r(\lambda) = s(\lambda)$ and $\lvert \lambda \rvert > 0$.
For $u,v \in E^0$, $u$ is said to be \emph{connected} to $v$ if there is a path $\lambda \in E^*$ such that $s(\lambda) = u$ and $r(\lambda) = v$, and this is denoted by $u \geq v$ \cite[Section 4.4]{lind_marcus}.
A vertex is said to be \emph{maximal}, if it is connected to all other
vertices. $E$ is said to be irreducible if all vertices are maximal.
If $E$ has a unique maximal vertex, this vertex is said to be the \emph{root} of $E$.
$E$ is said to be \emph{essential} if every vertex emits
and receives an edge.
For a finite essential directed graph $E$, the 
\emph{edge shift} $(\X_E, \sigma_E)$ is defined by
\begin{displaymath}
  \X_E = \left\{ x \in (E^1)^\Z \mid r(x_i) = s(x_{i+1}) \textrm{ for all }
  i \in \Z \right\}.
\end{displaymath}

A \emph{labelled graph} $(E, \LL)$ over an alphabet $\AA$ consists
of a directed graph $E$ and a surjective labelling map $\LL \colon E^1
\to \AA$. Extend the labelling map to $\LL \colon E^* \to \AA^*$ by
defining $\LL(e_1 
\cdots e_n) = \LL(e_1) \cdots \LL(e_n) \in \AA^*$.
For a finite essential labelled graph $(E, \LL)$, define the shift space $(\X_{(E, \LL)}, \sigma)$ by 
\begin{displaymath}
  \X_{(E, \LL)} = \left\{ \left( \LL(x_i) \right)_i \in \AA^\Z \mid 
                          x \in \X_E  \right\}.
\end{displaymath}
The
labelled graph $(E, \LL)$ is said to be a \emph{presentation} of the
shift space $\X_{(E, \LL)}$, and a \emph{representative} of a word $w \in
\BB(\X_{(E, \LL)}) $ is a path $\lambda \in E^*$ such that
$\LL(\lambda) = w$. Representatives of rays are defined analogously.
If $H \subseteq E^0$ then the \emph{subgraph of} $(E, \LL)$ \emph{induced by} $H$ is the labelled subgraph of $(E, \LL)$ with vertices $H$ and edges $\{ e \in E^1 \mid s(e),r(e) \in H \}$. 

\subsubsubsection{Sofic shifts}
A function $\pi \colon X_1 \to X_2$ between shift spaces $X_1$ and
$X_2$ is said to be a \emph{factor map} if it is continuous,
surjective, and shift commuting. A shift space is
called \emph{sofic} \cite{weiss} if it is the image of an SFT under a factor
map.
A shift space is sofic if and only 
if it can be presented by a finite labelled graph \cite{fischer}. 
A sofic shift
space is irreducible if and only if it can be presented by an
irreducible labelled graph (see \cite[Section 3.1]{lind_marcus}).
Let $(E, \LL)$ be a finite labelled graph and let $\pi_\LL \colon \X_E \to \X_{(E,  \LL)}$ be the factor map induced by the labelling map $\LL \colon
E^1 \to \AA$ then the SFT $\X_E$ is called a \emph{cover} of the
sofic shift $\X_{(E, \LL)}$, and $\pi_\LL$ is called the covering map. 

A presentation $(E,\LL)$ of a sofic shift space $X$ is said to be \emph{left-resolving}
if no vertex in $E^0$ receives two edges with the same label. 
Fischer proved \cite{fischer} that, up to labelled graph
isomorphism, every irreducible sofic shift has a unique left-resolving
presentation with fewer vertices than any other left-resolving
presentation. This is called the \emph{left Fischer cover} of $X$,
and it is denoted $(F, \LL_F)$. 
An irreducible sofic shift is said to have \emph{almost finite type} (AFT) \cite{marcus,nasu_aft} if the left Fischer cover is right-closing (see e.g. \cite[Def. 5.1.4]{lind_marcus}).

For $x \in \BB(X) \cup X^+$, define the \emph{predecessor set} of $x$ to be
the set of left-rays which may precede $x$ in $X$
(see \cite[Sections I and III]{jonoska_marcus} and \cite[Exercise  3.2.8]{lind_marcus}). The \emph{follower set} of a left-ray or word is defined
analogously.
Let $(E, \LL)$ be a labelled graph presenting $X$ and let  $v \in E^0$. Define the \emph{predecessor set} of $v$ to be the set of left-rays in $X$ which have a presentation terminating at $v$. This is denoted $P_\infty^E(v)$, or just $P_\infty(v)$ when $(E, \LL)$ is understood from the context. The presentation $(E, \LL)$ is said to be \emph{predecessor-separated} if $P_\infty^E(u) \neq P_\infty^E(v)$ when $u,v \in E^0$ and $u \neq v$.

The \emph{left Krieger cover} of the shift space $X$ is the labelled graph
$(K, \LL_K)$ where $K^0 = \{ P_\infty(x^+) \mid x^+ \in X^+\}$,
and where there is an edge labelled $a \in \AA$ from $P \in K^0$ to
$P' \in K^0$ if and only if there exists $x^+ \in X^+$ such that $P
= P_\infty(a x^+)$ and $P' = P_\infty(x^+)$.
The \emph{past set cover} of the shift space $X$ is the labelled graph
$(\psc, \LL_\psc)$ where $\psc^0 = \{ P_\infty(w) \mid w \in \BB(X) \}$ and where the edges and labels are constructed as in the Krieger cover.
A shift space is sofic if and only if the number of predecessor
  sets is finite \cite[\S 2]{krieger_sofic_I}, so the left Krieger
  cover is finite exactly when the shift space is sofic. 
The left Fischer cover, the left Krieger cover, and the past set cover are left-resolving and predecessor-separated presentations of $X$.

The right Krieger cover and the future set cover are right-resolving and follower-separated covers defined analogously to the left Krieger cover and the past set cover, respectively.
Every result developed for left-resolving covers in the
following has an analogue for the corresponding right-resolving
cover. These results can easily be obtained by considering the 
transposed shift space $X^\textrm{T}$ (see e.g. \cite[p. 39]{lind_marcus}).

\section{Generalizing the Fischer cover}
\label{sec_gfc} 
Jonoska \cite{jonoska} proved that a reducible sofic shift does not necessarily have a unique minimal left-resolving presentation. The aim of this section is to define a generalization of the left Fischer cover as the subgraph of the left Krieger cover induced by a certain subset of vertices. 
Let $X$ be a sofic shift space, and let $(K, \LL_K)$ be the
left Krieger cover of $X$. 
A predecessor set $P  \in K^0$ is said to be
\emph{non-decomposable} if $V \subseteq K^0$ and $P = \bigcup_{Q \in V} Q$ implies that $P \in V$.

\begin{lem}
\label{lem_decomposable}
If $P \in K^0$ is non-decomposable then the  subgraph of $(K, \LL_K)$ induced by $K^0 \setminus \{ P \}$ is not a presentation of $X$.  
\end{lem}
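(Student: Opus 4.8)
The plan is to argue by contradiction: assuming that the subgraph $(K',\LL_{K'})$ of $(K,\LL_K)$ induced by $K^0\setminus\{P\}$ is a presentation of $X$, I will produce a decomposition of $P$ contradicting non-decomposability. Observe first that every bi-infinite path in $K'$ is a bi-infinite path in $K$, so $\X_{(K',\LL_{K'})}\subseteq\X_{(K,\LL_K)}=X$ automatically; the content of the lemma is that this inclusion is strict.

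Before the main step I would record two facts about the left Krieger cover, both immediate from the definition of its edge set: (a) for every $z^+\in X^+$ the vertex $P_\infty(z^+)$ emits an infinite path labelled $z^+$, namely $P_\infty(z_0z_1\cdots)\xrightarrow{z_0}P_\infty(z_1z_2\cdots)\xrightarrow{z_1}\cdots$; and (b) if $w^-\in X^-$ with $w^-z^+\in X$, then the "backward" chain of predecessor sets $\cdots\xrightarrow{w_{-2}}P_\infty(w_{-1}z^+)\xrightarrow{w_{-1}}P_\infty(z^+)$ is an infinite path in $K$ labelled $w^-$ terminating at $P_\infty(z^+)$. Combining (a) and (b) with $\X_{(K,\LL_K)}=X$ yields the two consequences I actually use: if an infinite path labelled by a left-ray $y^-$ terminates at a vertex $Q\in K^0$, then $y^-\in Q$ (concatenate with the path from (a), where $Q=P_\infty(z^+)$, to get $y^-z^+\in X$); and if an infinite path labelled by a right-ray $x^+$ starts at $Q\in K^0$, then $Q\subseteq P_\infty(x^+)$ (for each $w^-\in Q$ concatenate the path from (b) with the given path to get $w^-x^+\in X$).

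Now choose $x^+\in X^+$ with $P_\infty(x^+)=P$, which exists since $P\in K^0$. For each left-ray $y^-\in P$ we have $y^-x^+\in X$, so by the contradiction hypothesis $y^-x^+$ has a presentation $\cdots e_{-1}e_0e_1\cdots$ in $(K',\LL_{K'})$; let $Q_{y^-}:=r(e_{-1})=s(e_0)\in K^0\setminus\{P\}$ be the vertex at the origin. Reading this path in $K$, its left tail is labelled $y^-$ and terminates at $Q_{y^-}$ while its right tail is labelled $x^+$ and starts at $Q_{y^-}$, so the two consequences above give $y^-\in Q_{y^-}\subseteq P_\infty(x^+)=P$. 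Setting $V:=\{Q_{y^-}\mid y^-\in P\}$, a nonempty subset of $K^0\setminus\{P\}$ (nonempty because $P=P_\infty(x^+)$ contains the left-ray of any $x\in X$ whose right-ray is $x^+$), we obtain $P=\bigcup_{y^-\in P}\{y^-\}\subseteq\bigcup_{Q\in V}Q\subseteq P$, i.e.\ $P=\bigcup_{Q\in V}Q$ with $P\notin V$ — contradicting the non-decomposability of $P$.

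The step demanding the most care is establishing (a), (b) and their two consequences: one must check that each block $w_{-k}\cdots w_{-1}z^+$ occurring in the backward chain is again the right-ray of an element of $X$ (this uses shift-invariance of $X$) so that the chain really is a path in $K$, and that concatenating a backward path, a vertex, and a forward path yields a legitimate element of the edge shift $\X_K$ whose label therefore lies in $X$. Once these preliminaries are in place, the remainder is the short set-theoretic observation above.
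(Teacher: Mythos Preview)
Your proof is correct and follows essentially the same approach as the paper: both arguments pick $x^+$ with $P_\infty(x^+)=P$, use the two basic facts that (i) a left-ray with a presentation terminating at $Q\in K^0$ lies in $Q$ and (ii) a right-ray with a presentation starting at $Q$ satisfies $Q\subseteq P_\infty(x^+)$, and then compare $P$ with the union of the vertices in $K^0\setminus\{P\}$ at which presentations of $x^+$ can start. The only difference is cosmetic: the paper argues directly (it takes $V$ to be \emph{all} vertices in $K^0\setminus\{P\}$ where a presentation of $x^+$ starts, invokes non-decomposability to pick $y^-\in P\setminus\bigcup_{Q\in V}Q$, and exhibits $y^-x^+$ as an unrepresented point), whereas you run the contrapositive by building $V$ from the vertices $Q_{y^-}$ and deriving a forbidden decomposition. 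Your write-up is also more explicit about the preliminary facts (a), (b), which the paper uses without comment.
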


\begin{proof}
Let $E$ be the
subgraph of $K$ induced by $K^0 \setminus \{P\}$.
Choose $x^+ \in X^+$ such that $P = P_\infty(x^+)$.
Let $V \subseteq K^0 \setminus\{ P \}$ be the set of vertices where a
presentation of $x^+$ can start.
Then $Q \subseteq P_\infty(x^+) = P$ for each $Q \in V$, and by assumption, there exists $y^- \in P \setminus \bigcup_{Q \in V} Q$. Hence, there is no presentation of $y^-x^+$ in $(E, \LL_K|_E)$.       
\end{proof}

\noindent
Lemma \ref{lem_decomposable} shows that a subgraph of the left Krieger cover which presents the same shift must contain all the non-decomposable vertices. The next example shows that this subgraph is not always large enough.

\begin{example}
\label{ex_gfc_justifying}
It is easy to check that the labelled graph in Figure
\ref{fig_gfc_justifying} is the left Krieger cover of a reducible
sofic shift $X$. Note that the predecessor set $P$ is decomposable since $P = P_1
\cup P_2$, and that the graph obtained by removing the vertex $P$ and
all edges starting at or terminating at $P$ is not a presentation of
the same sofic shift since there is no presentation of $f^\infty
dbjk^\infty$ in this graph. Note, that there is a path from
$P$ to the vertex $P'$ which is non-decomposable. 
\end{example}

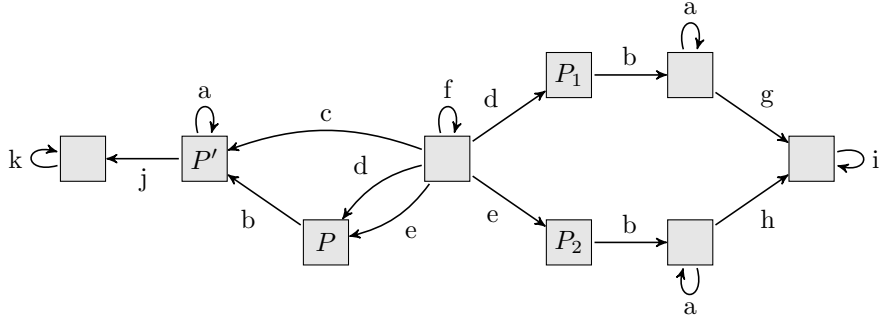
\begin{figure} 
\begin{center}
\begin{tikzpicture}
  [bend angle=45,
   knude/.style = {circle, inner sep = 0pt},
   vertex/.style = {circle, draw, minimum size = 1 mm, inner sep =
      0pt, fill=black!10},
   textVertex/.style = {rectangle, draw, minimum size = 6 mm, inner sep =
      1pt, fill=black!10},
   to/.style = {->, shorten <= 1 pt, >=stealth', semithick}]
  
  \matrix[row sep=5mm, column sep=10mm]{
   & & & & \node[textVertex] (P1) {$P_1$}; & \node[textVertex] (h1)
    {}; & \\
  \node[textVertex] (v2) {}; & \node[textVertex] (v1) {$P'$};  & & 
  \node[textVertex] (O) {}; & & & \node[textVertex] (h3) {}; \\
  &  & \node[textVertex] (P) {$P$}; & & \node[textVertex] (P2) {$P_2$};
  & \node[textVertex] (h2) {}; & \\
  };
  \draw[to, loop above]  (O) to node[auto] {f} (O);
  \draw[to, bend right=20] (O) to node[auto,swap] {c} (v1);
  \draw[to] (O) to node[auto] {d} (P1);
  \draw[to] (O) to node[auto,swap] {e} (P2);
  \draw[to] (P1) to node[auto] {b} (h1);
  \draw[to] (P2) to node[auto] {b} (h2);
  \draw[to, loop above]  (h1) to node[auto] {a} (h1);
  \draw[to, loop below]  (h2) to node[auto] {a} (h2);
  \draw[to] (h1) to node[auto] {g} (h3);
  \draw[to] (h2) to node[auto,swap] {h} (h3);
  \draw[to, loop right]  (h3) to node[auto] {i} (h3);
 
  \draw[to, bend right=20] (O) to node[auto,swap] {d} (P);
  \draw[to, bend left=20] (O) to node[auto] {e} (P);

  \draw[to] (P) to node[auto] {b} (v1);
  \draw[to, loop above]  (v1) to node[auto] {a} (v1);

  \draw[to] (v1) to node[auto] {j} (v2);
  \draw[to, loop left]  (v2) to node[auto] {k} (v2);

\end{tikzpicture}
\end{center}
\caption{Left Krieger cover of the shift considered in Example
  \ref{ex_gfc_justifying}.
  Note that the labelled graph is no longer a presentation of the same
  shift if the decomposable predecessor set $P = P_1 \cup P_2$ is
  removed.
}  
\label{fig_gfc_justifying}
\end{figure}

\noindent Together with Lemma \ref{lem_decomposable}, this example
motivates the following definition. 

\begin{definition}
The \emph{generalized left Fischer cover} $(G, \LL_G)$ of a sofic
shift $X$ is defined to be the subgraph of the left Krieger
cover induced by $G^0 = \{ P \in K^0 \mid  P \geq P', P \textrm{ non-decomposable} \}$. 
\end{definition}

\noindent The following proposition justifies the term generalized left Fischer cover.

\begin{prop}
\label{prop_justifying}
\qquad
\begin{enumerate}[(i)]
\item The generalized left Fischer cover of a sofic shift $X$
  is a left-resolving and predecessor-separated presentation of $X$. 
\item If $X$ is an irreducible sofic shift then the generalized left
  Fischer cover is equal to the left Fischer cover. 
\item If $X_1,X_2$ are sofic shifts with
  disjoint alphabets then the generalized left Fischer cover
  of $X_1 \cup X_2$ is the disjoint union of the generalized left
  Fischer covers of $X_1$ and $X_2$. 
\end{enumerate}
\end{prop}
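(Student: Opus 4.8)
The three parts are of rather different character: (i) carries the substance, while (ii) and (iii) reduce to understanding how non-decomposability and reachability behave in the left Krieger cover $(K,\LL_K)$ of $X$. I would base the whole argument on two elementary observations. The first is a \emph{decomposition fact}: since $K^0$ is finite, well-founded induction on proper inclusion shows that every $P\in K^0$ equals the union of the non-decomposable vertices it contains; in particular, for each $p\in X$ and each $n$ the left-ray $p_{[-\infty,n-1]}$ lies in some non-decomposable vertex contained in $P_\infty(p_{[n,\infty)})$. The second is a \emph{left-extension recipe}: if $Q=P_\infty(z^+)\in K^0$ and $a\in\AA$ with $az^+\in X^+$, then $K$ has an edge labelled $a$ from $P_\infty(az^+)$ to $Q$, one has $P_\infty(az^+)\geq Q$ (so $P_\infty(az^+)$ reaches whatever $Q$ does), and if $Q\subseteq P_\infty(x^+)$ then $P_\infty(az^+)\subseteq P_\infty(ax^+)$ — all of which are one-line unwindings of the definitions.

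For part (i), $(G,\LL_G)$ is left-resolving because it is a subgraph of the left-resolving graph $K$. For the presentation claim it is enough to check $\BB(X)\subseteq\BB(\X_{(G,\LL_G)})$, the reverse inclusion being clear from $G\subseteq K$; so let $w\in\BB(X)$ have length $m$ and pick $p\in X$ with $p_{[0,m-1]}=w$. By the decomposition fact the left-ray $p_{[-\infty,m-1]}$ lies in some non-decomposable $R_m\subseteq P_\infty(p_{[m,\infty)})$; writing $R_m=P_\infty(z^+)$ and iterating the left-extension recipe to prepend $p_{m-1},\dots,p_0$ produces a path $R_0\to R_1\to\cdots\to R_m$ in $K$ labelled $w$ in which every $R_j\geq R_m$, hence every $R_j\in G^0$. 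Thus $w$ has a representative in $(G,\LL_G)$, and $\X_{(G,\LL_G)}=X$ follows. Predecessor-separatedness then follows from $P_\infty^G(P)=P$ for every $P\in G^0$: the inclusion $P_\infty^G(P)\subseteq P_\infty^K(P)=P$ is immediate (and $P_\infty^K(P)=P$ is standard, or again a consequence of the recipe), and for $P\subseteq P_\infty^G(P)$ one takes any $y^-\in P=P_\infty(x^+)$ and prepends its letters via the recipe, obtaining a left-infinite path labelled $y^-$ ending at $P$, all of whose vertices lie in $G^0$ because each reaches $P$.

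For part (ii), I would invoke the classical identification of the left Fischer cover $(F,\LL_F)$ of an irreducible sofic shift with the subgraph of $K$ induced by the set $T^0\subseteq K^0$ of predecessor sets of intrinsically synchronizing right-rays (equivalently, of right-rays with an intrinsically synchronizing prefix). It then suffices to prove $G^0=T^0$, and this rests on two points. First, $T^0$ receives no edges from $K^0\setminus T^0$: any edge into $P_\infty(x^+)\in T^0$ labelled $a$ has source $P_\infty(ax^+)$, and if $w$ is an intrinsically synchronizing prefix of $x^+$ with $P_\infty(w)=P_\infty(x^+)$ then $P_\infty(ax^+)=P_\infty(aw)$, where $aw$ is intrinsically synchronizing because it has the intrinsically synchronizing \emph{suffix} $w$; hence the source lies in $T^0$, and consequently every vertex that reaches $T^0$ already lies in $T^0$. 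Second, for irreducible $X$ the vertices of $T^0$ are exactly the non-decomposable vertices of $K$ — essentially the statement that the minimal predecessor sets (under inclusion) are precisely those of the intrinsically synchronizing rays, combined with the triviality that a minimal vertex is non-decomposable; this I would take from the existing Fischer-cover theory rather than reprove. Given these, $T^0$ is the set of non-decomposable vertices, hence $T^0\subseteq G^0$, while every $P\in G^0$ reaches some non-decomposable vertex, i.e. some vertex of $T^0$, and therefore lies in $T^0$; so $G^0=T^0$ and $(G,\LL_G)$ coincides with $(F,\LL_F)$.

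For part (iii), disjointness of the alphabets forces every point of $X_1\cup X_2$ to lie in exactly one of $X_1,X_2$, so $(X_1\cup X_2)^\pm=X_1^\pm\sqcup X_2^\pm$ and, for $x^+\in X_i^+$, the predecessor set of $x^+$ computed in $X_1\cup X_2$ equals the one computed in $X_i$. Hence $K^0(X_1\cup X_2)=K^0(X_1)\sqcup K^0(X_2)$ — a genuinely disjoint union, since a predecessor set of an $\AA_1$-ray is a nonempty subset of $X_1^-$ and so cannot coincide with one contained in $X_2^-$ — and no edge of $K(X_1\cup X_2)$ runs between the two parts, so $K(X_1\cup X_2)$ is the disjoint union of $K(X_1)$ and $K(X_2)$ as labelled graphs. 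Non-decomposability of a vertex $P\subseteq X_i^-$ involves only unions of vertices contained in $P$, all of which lie in $X_i^-$, so it is detected inside $K(X_i)$; likewise reachability cannot cross between the parts. It follows that $G^0(X_1\cup X_2)=G^0(X_1)\sqcup G^0(X_2)$ with matching edges, which is the assertion. The step I expect to take the most care is the presentation claim in (i): a priori the ``canonical'' representative of a word through the vertices $P_\infty(p_{[i,\infty)})$ need not run only through vertices that reach a non-decomposable vertex, and the left-extension recipe is precisely the device that lets one reroute it; for (ii), the single non-formal ingredient is the identification of $T^0$ with the non-decomposable vertices, which I would read off from the classical theory of the Fischer cover.
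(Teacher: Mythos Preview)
Your treatments of (i) and (iii) are correct and match the paper's approach. For (i) the paper is terser --- it shows directly that every left-ray $y^-\in X^-$ has a representative in $(G,\LL_G)$ by writing $P_\infty(x^+)$ as a union of non-decomposable vertices, picking one containing $y^-$, and noting that the $K$-path labelled $y^-$ ending there stays in $G$ --- but this is the same mechanism as your left-extension recipe. Your explicit verification that $P_\infty^G(P)=P$ is a welcome addition; the paper simply asserts predecessor-separatedness is inherited from $K$.

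Part (ii), however, has a genuine gap. Your claim (b), that for irreducible $X$ the set $T^0$ coincides with the non-decomposable vertices of $K$, is false: only the inclusion $\{\text{non-decomposable}\}\subseteq T^0$ holds. Example~\ref{ex_2_invariants} gives a counterexample. There $P_\infty(y)\in F^0=T^0$, yet one checks from the Fischer cover that $P_\infty(y)=P_\infty(u)\cup P_\infty(v)$ with both summands in $K^0$ and properly contained in $P_\infty(y)$, so $P_\infty(y)$ is decomposable. Your heuristic for (b) is also off: the vertices of $T^0$ are \emph{not} exactly the minimal predecessor sets (in that same example $P_\infty(y)$ is not minimal since $P_\infty(u)\subsetneq P_\infty(y)$), and even granting that premise, combining ``minimal $=T^0$'' with ``minimal $\Rightarrow$ non-decomposable'' would yield only $T^0\subseteq\{\text{non-decomposable}\}$, precisely the false direction.

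The repair is easy and uses what you already have. You only invoke the false direction of (b) to get $T^0\subseteq G^0$. But $T^0=F^0$ is irreducible, and the correct inclusion $\{\text{non-decomposable}\}\subseteq T^0$ --- which is exactly the paper's $S$-argument: if $P=P_\infty(x^+)$ is non-decomposable and $S\subseteq F^0$ is the set of starting vertices for $x^+$ in $F$, then $P=\bigcup_{v\in S}P_\infty(v)$ forces $P\in S\subseteq F^0$ --- guarantees $T^0$ contains a non-decomposable vertex. Irreducibility then gives that every vertex of $T^0$ reaches one, so $T^0\subseteq G^0$. Your proof of $G^0\subseteq T^0$ via (a) already needs only the correct inclusion. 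With this fix your argument agrees with the paper's, which phrases the same reasoning as ``$F^0$ is the top irreducible component of $K$''.
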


\begin{proof}
Given $y^- \in X^-$, choose $x^+ \in X^+$ such that $y^- \in
P_\infty(x^+) = P$. By definition of the generalized left Fischer cover,
there exist vertices $P_1, \ldots, P_n \in G^0$ such that $P = \bigcup_{i=1}^n
P_i$. Choose $i$ such that $y^- \in P_i$.
By construction, the left Krieger cover contains a path labelled $y^-$ terminating at
$P_i$. Since $P_i \in G^0$, this is also a path in the generalized
left Fischer cover. This proves that the generalized left Fischer
cover is a presentation of $X^-$, and hence also a presentation of
$X$. Since the left Krieger cover is left-resolving and
predecessor-separated, so is the generalized left Fischer cover. 

Let $X$ be an irreducible sofic shift, and identify the left Fischer cover $(F, \LL_F)$ with the top irreducible component of the left Krieger cover $(K,\LL_K)$ \cite[Lemma 2.7]{krieger_sofic_I}.
By the construction of the generalized left Fischer cover, it follows that the left Fischer cover is a subgraph of the generalized left Fischer cover.
Let $x^+ \in X^+$ such that $P = P_\infty(x^+)$ is non-decomposable. Let $S \subseteq F^0$ be the set of vertices where a presentation of $x^+$ in $(F,\LL_F)$ can start. Then $P = \bigcup_{v \in S} P_\infty(v)$, so $P \in S \subseteq F^0$ by assumption. 

Since $X_1$ and $X_2$ have no letters in common, the left Krieger
cover of $X_1 \cup X_2$ is just the disjoint union of the left
Krieger covers of $X_1$ and $X_2$. The generalized left Fischer
cover inherits this property from the left Krieger cover.
\end{proof}

The shift consisting of two non-interacting copies of the even shift is a simple example where the generalized left Fischer cover is a proper subgraph of the left Krieger cover.

\begin{lem}
\label{lem_GLFC_essential}
Let $X$ be a sofic shift with left Krieger cover $(K, \LL_K)$. If there
is an edge labelled $a$ from a non-decomposable $P \in K^0$ to a
decomposable $Q \in K^0$ then there exists a non-decomposable $Q' \in
K^0$ and an edge labelled $a$ from $P$ to $Q'$.   
\end{lem}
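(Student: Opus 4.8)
The plan is to exploit the ``inverse-shift'' description of the edges of the left Krieger cover. For a set $S$ of left-rays and a letter $a$, put $a^{-1}S := \{\, y^- \mid y^- a \in S \,\}$. If there is an edge labelled $a$ from $P$ to $Q$ in $(K,\LL_K)$, then by definition there is $x^+ \in X^+$ with $P = P_\infty(ax^+)$ and $Q = P_\infty(x^+)$, and for every left-ray $y^-$ one has $y^- \in P_\infty(ax^+)$ iff $y^- a x^+ \in X$ iff $y^- a \in P_\infty(x^+)$; hence $P = a^{-1}Q$, so $P$ is already determined by $a$ and $Q$. Alongside this I would record the companion fact: if $R \in K^0$ and $a^{-1}R \neq \emptyset$, then choosing $z^+ \in X^+$ with $P_\infty(z^+) = R$ gives $a^{-1}R = \{\, y^- \mid y^- a z^+ \in X \,\}$, which being nonempty forces $az^+ \in X^+$, so $a^{-1}R = P_\infty(az^+) \in K^0$ and there is an edge labelled $a$ from $P_\infty(az^+) = a^{-1}R$ to $P_\infty(z^+) = R$ (the set $a^{-1}R$ does not depend on the choice of $z^+$, being defined directly from $R$).

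Given these facts, I would argue as follows. Assume there is an edge labelled $a$ from a non-decomposable $P$ to a decomposable $Q$, and write $Q = \bigcup_{R \in V} R$ with $V \subseteq K^0$ and $Q \notin V$, so that $R \subsetneq Q$ for each $R \in V$. Since $a^{-1}$ distributes over unions, $P = a^{-1}Q = \bigcup_{R \in V} a^{-1}R$; discarding the empty terms (at least one is nonempty because $P \neq \emptyset$) and invoking the companion fact, this expresses $P$ as a union of members of $K^0$. Non-decomposability of $P$ then forces $P = a^{-1}R_0$ for some $R_0 \in V$, and the companion fact supplies an edge labelled $a$ from $P$ to $R_0$, with $R_0 \subsetneq Q$.

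It then remains to iterate. If $R_0$ is non-decomposable, I take $Q' = R_0$. Otherwise, since now $P = a^{-1}R_0$, the previous step applies verbatim with $Q$ replaced by $R_0$ and yields an edge labelled $a$ from $P$ to some $R_1 \subsetneq R_0$; repeating produces a strictly decreasing chain $Q \supsetneq R_0 \supsetneq R_1 \supsetneq \cdots$ in $K^0$. Because $X$ is sofic, $K^0$ is finite, so this chain must terminate, and it can terminate only at a non-decomposable vertex $Q'$, which then has an edge labelled $a$ coming from $P$. I expect the only subtle point to be the companion fact in the first paragraph — checking that $a^{-1}R$ really is a predecessor set of some right-ray, which rests on $az^+$ being a legal right-ray as soon as $a^{-1}R \neq \emptyset$; the rest is bookkeeping plus the finiteness of $K^0$.
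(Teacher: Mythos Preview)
Your proof is correct and follows essentially the same idea as the paper's: both express $P$ as the union over pieces $R$ of $Q$ of the sets $a^{-1}R=\{y^-\mid y^-a\in R\}$, then use non-decomposability of $P$ to conclude $P=a^{-1}R_0$ for some piece $R_0$, yielding an edge labelled $a$ from $P$ to $R_0$. The only difference is cosmetic: the paper immediately writes $Q=Q_1\cup\cdots\cup Q_n$ with each $Q_i$ non-decomposable (tacitly using finiteness of $K^0$) so one application suffices, whereas you take an arbitrary decomposition and make the descent explicit via the strictly decreasing chain; your $a^{-1}$ notation is a clean repackaging of the paper's set $S$.
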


\begin{proof}
Choose $x^+ \in X^+$ such that $P = P_\infty(a x^+)$ and $Q =
P_\infty(x^+)$. Since $Q$ is decomposable, there exist $n > 1$ and
non-decomposable $Q_1, \ldots, Q_n \in K^0 \setminus \{ Q \}$
such that $Q = Q_1 \cup \cdots \cup Q_n$.
Let $S$ be the set of predecessor sets $P' \in K^0$ for which
there is an edge labelled $a$ from $P'$ to $Q_j$ for some $1 \leq j
\leq n$.
Given $y^- \in P$, $y^-ax^+ \in X$, so $y^-a \in Q$.
Choose $1 \leq i \leq n$ such that $y^-a \in Q_i$.
By construction, there exists $P' \in S$ such that $y^- \in
P'$. Reversely, if $y^- \in P' \in S$ then there is an edge labelled
$a$ from $P'$ to $Q_i$ for some $1 \leq i \leq n$, so $y^- a \in Q_i
\subseteq Q$. This implies that $y^-ax^+ \in X$, so $y^- \in P$.
Thus $P = \bigcup_{P' \in S} P'$, but $P$ is non-decomposable, so this
means that $P \in S$.
Hence, there is an edge labelled $a$ from $P$ to $Q_i$ for som $i$,
and $Q_i$ is non-decomposable. 
\end{proof}

\noindent The following proposition is an immediate consequence of this
result and the definition of the generalized left Fischer cover.

\begin{prop}
\label{prop_GLFC_essential}
The generalized left Fischer cover is essential.
\end{prop}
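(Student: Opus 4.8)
The plan is to verify the two defining conditions for a directed graph to be essential: every vertex of $(G,\LL_G)$ emits an edge, and every vertex receives an edge. Fix a vertex $P \in G^0$; by definition $P$ is a non-decomposable predecessor set in $K^0$ with $P \geq P'$ (recall $P'$ denotes the root of the generalized Fischer cover, or more precisely $G^0$ consists of non-decomposable $P$ that connect to $P'$ — in the reducible case this should be read componentwise, but the argument below does not use this connectivity clause, only non-decomposability and that membership in $G^0$ is equivalent to being non-decomposable and lying above $P'$).

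First I would handle the \emph{emitting} condition. Since the left Krieger cover $(K,\LL_K)$ is itself essential (it is an edge-shift presentation built from a finite labelled graph with every vertex a predecessor set of some $x^+ \in X^+$, hence every vertex emits and receives an edge in $K$), the vertex $P$ emits some edge labelled $a$ in $K$, say from $P$ to $Q \in K^0$. If $Q$ is non-decomposable, then $Q \in G^0$ once we check $Q \geq P'$; if $Q$ is decomposable, Lemma~\ref{lem_GLFC_essential} supplies a non-decomposable $Q' \in K^0$ with an edge labelled $a$ from $P$ to $Q'$. So in either case $P$ emits an edge in $K$ to a non-decomposable vertex $Q'$. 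The remaining point is that $Q' \in G^0$, i.e. that $Q'$ still connects to $P'$: this follows because $P \geq P'$ via some path in $K$, and that path must pass through some non-decomposable vertex reachable from $P$ in one step — more cleanly, one argues that the set of non-decomposable vertices reachable from $P$ together with Lemma~\ref{lem_GLFC_essential} is closed under following edges, so the path from $P$ to $P'$ can be rerouted to stay among non-decomposable vertices, showing $Q' \geq P'$ and hence $Q' \in G^0$. Then the edge $P \to Q'$ lies in $(G,\LL_G)$.

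For the \emph{receiving} condition, let $P \in G^0$ and pick $x^+ \in X^+$ with $P = P_\infty(x^+)$, and pick $y^- \in P$ (nonempty since $P$ is a predecessor set of a right-ray). Write $y^- = \cdots y_{-2} y_{-1}$ and set $a = y_{-1}$. Then in $K$ there is an edge labelled $a$ from $P_\infty(y_{-1} x^+)$ to $P = P_\infty(x^+)$, so $P$ receives an edge in $K$. To get an edge \emph{in $G$} terminating at $P$, I would run the transpose of the argument in Lemma~\ref{lem_GLFC_essential}: decompose the source vertex $P_\infty(a x^+)$ into non-decomposable pieces, observe that $y^-$ lies in one such piece $\tilde P$, and check there is still an edge labelled $a$ from $\tilde P$ to $P$ (this is exactly the statement that each non-decomposable summand of a predecessor set emitting an edge labelled $a$ to $P$ still emits such an edge — provable by the same predecessor-set bookkeeping as in Lemma~\ref{lem_GLFC_essential}). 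Finally $\tilde P \geq P \geq P'$, so $\tilde P \in G^0$ and the edge $\tilde P \to P$ is an edge of $(G,\LL_G)$.

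The main obstacle is the bookkeeping needed to guarantee that the chosen incoming/outgoing non-decomposable vertex still lies in $G^0$, that is, still satisfies the connectivity condition $\geq P'$; the essentialness of the Krieger cover and Lemma~\ref{lem_GLFC_essential} do the real work of producing edges, but one must be a little careful that the rerouting keeps us inside the induced subgraph $G$. In the irreducible case this is automatic (every vertex is maximal), and in general it reduces to the observation that non-decomposability is inherited along the decomposition used in Lemma~\ref{lem_GLFC_essential}, so I expect the argument to be short once that point is isolated.
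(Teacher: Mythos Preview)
You have misread the definition of $G^0$. It is \emph{not} the set of non-decomposable vertices lying above some fixed root; rather $G^0 = \{P \in K^0 \mid P \geq P' \text{ for some non-decomposable } P'\}$, so decomposable vertices can (and do, as in Example~\ref{ex_gfc_justifying}) belong to $G^0$. This misreading derails both halves of your argument.

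With the correct definition the proof is nearly immediate, which is all the paper claims. For \emph{receiving}: $G^0$ is upward-closed under $\geq$, so if $Q \to P$ is any edge of $K$ with $P \in G^0$, then $Q \geq P \geq P'$ and hence $Q \in G^0$; since $K$ is essential, $P$ receives an edge in $G$. No ``transpose'' of Lemma~\ref{lem_GLFC_essential} is needed, and indeed your sketch of it has a gap: knowing $y^- \in \tilde P \subseteq P_\infty(ax^+)$ does not by itself produce $z^+$ with $\tilde P = P_\infty(az^+)$ and $P_\infty(z^+) = P$, which is what an edge $\tilde P \xrightarrow{a} P$ in $K$ requires. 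For \emph{emitting}: if $P \in G^0$ is decomposable then $P \neq P'$, so the path $P \geq P'$ has positive length and its first edge lands in $G^0$; if $P$ is non-decomposable then $K$ gives some edge $P \xrightarrow{a} Q$, and Lemma~\ref{lem_GLFC_essential} lets you replace $Q$ by a non-decomposable $Q'$, which lies in $G^0$ simply because $Q' \geq Q'$. Your worry about whether $Q'$ ``still connects to $P'$'' evaporates once you read the definition correctly.
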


The left Fischer cover of an irreducible sofic shift $X$ is minimal
in the sense that no other left-resolving presentation of $X$
has fewer vertices. This not always the case for the generalized left Fischer cover.

\subsubsubsection{Canonical}
Krieger proved that a conjugacy $\Phi \colon X_1
\to X_2$ between sofic shifts with left Krieger covers $(K_1, \LL_1)$ and $ (K_2,\LL_2)$, respectively, induces a conjugacy
$\varphi
\colon \X_{K_1} \to \X_{K_2}$ such that $\Phi \circ
\pi_1 = \pi_2 \circ \varphi$ when $\pi_i \colon \X_{K_i} \to X_i$
is the covering map of the left Krieger cover of $X_i$ \cite{krieger_sofic_I}. A cover with
this property is said to be \emph{canonical}.
The next goal is to prove that the generalized left Fischer cover is canonical.
This will be done by using results and methods used by Nasu
\cite{nasu} to prove that the left Krieger cover is canonical. 

\begin{definition}[Bipartite code] \label{def_bipartite_code}
When $\AA, \CC, \DD$ are alphabets, an injective map $f \colon \AA \to
\CC \DD$ is called a \emph{bipartite expression}. If $X_1, X_2$ are
shift spaces with alphabets $\AA_1$ and $\AA_2$, respectively, and if $f_1
\colon \AA_1 \to \CC \DD$ is a bipartite expression then a map $\Phi
\colon X_1 \to X_2$ is said to be a \emph{bipartite code induced by}
$f_1$ if there exists a bipartite expression $f_2 \colon \AA_2 \to \DD
\CC$ such that one of the following two conditions is satisfied:
\begin{enumerate}[(i)]
\item If $x \in X_1$, $y = \Phi(x)$, and $f_1(x_i) = c_i d_i$ with $c_i
  \in \CC$ and $d_i \in \DD$ for all $i \in \Z$ then $f_2(y_i) = d_i
  c_{i+1}$ for all $i \in \Z$. 
\item If $x \in X_1$, $y = \Phi(x)$, and $f_1(x_i) = c_i d_i$ with $c_i
  \in \CC$ and $d_i \in \DD$ for all $i \in \Z$ then $f_2(y_i) = d_{i-1}
  c_i$ for all $i \in \Z$.
\end{enumerate}
A mapping $\Phi \colon X_1 \to X_2$ is called a \emph{bipartite
  code}, if it is the bipartite code induced by some bipartite
expression.
\end{definition}

\noindent It is clear that a bipartite code is a conjugacy and that the inverse
of a bipartite code is a bipartite code.

\begin{thm}[Nasu {\cite[Thm. 2.4]{nasu}}]
\label{thm_nasu_decomposition}
Any conjugacy between shift spaces can be decomposed into a
product of bipartite codes. 
\end{thm}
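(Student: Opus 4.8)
The plan is to reduce the statement, using the fact that every conjugacy between subshifts is a sliding block code (Curtis--Hedlund--Lyndon), to a single technical case, and to settle that case by induction on a window-size complexity.

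First I would observe that several familiar conjugacies are bipartite codes or compositions of them. The shift $\sigma$ on a subshift over $\AA$ is a bipartite code: take $\CC=\AA$, $\DD=\{\ast\}$, $f_1(a)=a\ast$ and $f_2(a)=\ast a$, so that condition~(i) forces $y_i\leftrightarrow x_{i+1}$; since the inverse of a bipartite code is again one, $\sigma^{\pm1}$ is a bipartite code. An alphabet bijection carrying $X_1$ onto $X_2$ is a bipartite code, via condition~(ii) with $\DD$ a single letter. The map $\beta\colon X\to X^{[2]}$, $x\mapsto((x_i,x_{i+1}))_i$, to the $2$-block presentation is a bipartite code: take $\CC=\DD=\AA$, $f_1(a)=aa$, $f_2((a,b))=ab$, so condition~(i) gives $y_i\leftrightarrow x_ix_{i+1}$; iterating, the conjugacy onto any higher block presentation is a composition of bipartite codes. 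Now let $\Phi\colon X_1\to X_2$ be arbitrary. Composing with a power of $\sigma$ we may assume $\Phi$ has memory $0$ and some anticipation $N$; composing on the source side with the conjugacy onto the $(N+1)$-block presentation, we may further assume $\Phi$ is given by a $1$-block rule $\phi\colon\AA_1\to\AA_2$. Hence it suffices to prove that every $1$-block conjugacy is a composition of bipartite codes.

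For this core case, let the inverse of the $1$-block conjugacy $\Phi$ have window $[-p,q]$, and induct on $p+q$. If $p=q=0$ then $\phi$ is an alphabet bijection and $\Phi$ is a relabeling, hence a bipartite code. For the inductive step, say $q\ge1$ (the case $p\ge1$ is symmetric). I would peel off one bipartite code: construct an intermediate subshift $X_3$ and a bipartite code $\Psi\colon X_1\to X_3$ --- a single round of symbol splitting, cutting each $x_i$ into a pair $(c_i,d_i)$ so chosen that decoding $\Phi^{-1}$ through the regrouped word $\cdots c_id_ic_{i+1}d_{i+1}\cdots$ consumes one coordinate fewer --- in such a way that $\Phi=\Phi''\circ\Theta$, where $\Theta$ is a composition of bipartite codes (the splitting $\Psi$, a bounded power of $\sigma$, and, where needed, an amalgamation, which is the inverse of a splitting and hence again a bipartite code) and $\Phi''$ is a $1$-block conjugacy whose inverse has window $[-p,q-1]$. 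Since $p+q$ strictly decreases, finitely many steps reduce $\Phi$ to a relabeling; reassembling the factors expresses the $1$-block conjugacy, and therefore --- by the reductions of the previous paragraph --- every conjugacy, as a composition of bipartite codes.

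The main obstacle is exactly the splitting step. One must choose the bipartite expression $f_1$ so that it is injective, so that the regrouping $x_i\mapsto d_ic_{i+1}$ of the split word has image precisely the intended subshift $X_3$ --- i.e. the $\CC$- and $\DD$-pieces interlock consistently with the constraints defining $X_1$ --- and so that the companion expression $f_2$ exists and is injective; and one must then verify that $\Phi''$ really has smaller inverse window. The last point is the delicate one, since the naive choices of cut merely shift window length between $\Phi$ and $\Phi^{-1}$ without lowering the total, so the correct complexity invariant and the matching choice of cut must be pinned down; this is where Nasu's apparatus of textile systems carries out the bookkeeping. Granted the splitting lemma, the induction together with the opening reductions completes the proof.
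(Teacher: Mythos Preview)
The paper does not prove this theorem at all: it is stated with attribution to Nasu \cite[Thm.~2.4]{nasu} and used as a black box in the proof of Theorem~\ref{thm_GFC_canonical}. There is therefore no proof in the paper to compare your proposal against.

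Evaluating your sketch on its own merits: the opening reductions are correct and cleanly done. Your verifications that $\sigma$, alphabet bijections, and the passage to the $2$-block presentation are bipartite codes are all sound, and the reduction to the case of a $1$-block conjugacy is the right first move. The induction framework on the inverse window size $p+q$ is also a reasonable organizing principle.

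However, there is a genuine gap, and you name it yourself: the splitting step is not carried out. You describe what the intermediate bipartite code $\Psi$ ought to accomplish, but you do not construct it, and you acknowledge that ``the naive choices of cut merely shift window length between $\Phi$ and $\Phi^{-1}$ without lowering the total.'' The sentence ``Granted the splitting lemma, the induction \ldots\ completes the proof'' is an admission that the heart of the argument is missing. Your appeal to ``Nasu's apparatus of textile systems'' is also anachronistic: textile systems appear in Nasu's later work, not in the 1986 paper \cite{nasu} where this decomposition theorem is proved. In that paper the argument proceeds more directly, essentially by exhibiting the elementary conjugacies arising in a Williams-type decomposition as bipartite codes. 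If you want a self-contained proof, you need either to actually construct the splitting that strictly decreases $p+q$ (specifying $f_1$, $f_2$, and verifying the window bound for $\Phi''$), or to invoke the decomposition of a conjugacy into splittings and amalgamations and check that each of those is a bipartite code.
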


Let $\Phi \colon X_1 \to X_2$ be a bipartite code corresponding to
bipartite expressions $f_1 \colon \AA_1 \to \CC \DD$ and $f_2
\colon \AA_2 \to \DD \CC$, and use the bipartite expressions to recode
$X_1$ and $X_2$ to
\begin{align*}
  \hat X_1 &= \{ (f_1(x_i))_i \mid x \in X_1 \} \subseteq (\CC \DD)^\Z \\
  \hat X_2 &= \{ (f_2(x_i))_i \mid x \in X_2 \} \subseteq (\DD \CC)^\Z.
\end{align*}
For $i \in \{1,2\}$, $f_i$ induces a one-block conjugacy from $X_i$
to $\hat X_i$, and $\Phi$ induces a bipartite code $\hat \Phi \colon \hat X_1
\to \hat X_2$ which commutes with these conjugacies.
If $\Phi$ satisfies condition (i) in the definition of a bipartite
code then $(\hat \Phi (\hat x))_i = d_i c_{i+1}$ when $\hat x = (c_i d_i)_{i \in \Z}
\in \hat X_1$.
If it satisfies condition (ii) then $(\hat \Phi (\hat x))_i = d_{i-1}
c_i$ when $\hat x = (c_i d_i)_{i\in \Z} \in \hat X_1$. 
The shifts $\hat X_1$ and $\hat X_2$ will be called the \emph{recoded shifts}
of the bipartite code, and $\hat \Phi$ will be called the \emph{recoded
  bipartite code}.


A labelled graph $(G, \LL)$ is said to be \emph{bipartite} if $G$ is a
bipartite graph (i.e. the vertex set can be partitioned into two sets
$(G^0)_1$ and $(G^0)_2$ such that no edge has its range and source in
the same set).
When $(G, \LL)$ is a bipartite labelled 
graph over an alphabet $\AA$, define two graphs $G_1$ and $G_2$ as
follows: For $i \in \{1,2\}$, the vertex set of $G_i$ is $(G^0)_i$,
the edge set is 
the set of paths of length 2 in $(G, \LL)$ for which both range
and source are in $(G^0)_i$, and the range and source maps are
inherited from $G$.
For $i \in \{1,2\}$, define $\LL_i \colon G_i^1 \to \AA^2$ by
$\LL_i(ef) = \LL(e) \LL(f)$.
The pair $(G_1, \LL_1)$, $(G_2, \LL_2)$ is called the \emph{induced
pair of labelled graphs} of $(G, \LL)$. This decomposition is not
necessarily unique, but whenever a bipartite labelled graph is
mentioned, it will be assumed that the induced graphs are specified. 

\begin{remark}[Nasu {\cite[Remark 4.2]{nasu}}]
\label{rem_standard_code_of_bipartite_graph}
Let $(G, \LL)$ be a bipartite labelled graph for which the induced
pair of labelled graphs is $(G_1, \LL_1)$, $(G_2, \LL_2)$. Let
$X_1$ and $X_2$ be the sofic shifts presented by these graphs, and
let $\X_{G_1}, 
\X_{G_2}$ be the edge shifts generated by $G_1$, $G_2$.
The natural embedding $f \colon G_1^1 \to (G^1)^2$ is a bipartite
expression which induces two bipartite codes $\varphi_\pm \colon \X_{G_1}
\to \X_{G_2}$ such that $(\varphi_+(x))_i = f_i e_{i+1}$ and
$(\varphi_-(x))_i = f_{i-1} e_i$ when $x = (e_i f_i)_{i \in \Z} \in
\X_{G_1}$.     
Similarly, the embedding $F \colon \LL_1(G_1^1) \to (\LL(G^1))^2$ is a
bipartite expression which induces bipartite codes $\Phi_\pm \colon
X_1 \to X_2$ such that
$(\Phi_+(x))_i = b_i a_{i+1}$ and $(\Phi_-(x))_i = b_{i-1} a_i$ when
$x = (a_i b_i)_{i \in \Z} \in X_1$.
By definition, 
$\Phi_\pm \circ \pi_1 = \pi_2 \circ \varphi_\pm$ when 
$\pi_1 \colon \X_{G_1} \to X_1$, $\pi_2 \colon \X_{G_2} \to X_2$ are
the covering maps.
The bipartite codes $\varphi_\pm$ and $\Phi_\pm$ are called the
\emph{standard bipartite codes induced by} $(G, \LL)$. 
\end{remark}

 
\begin{lem}[Nasu {\cite[Cor. 4.6 (1)]{nasu}}]
\label{lem_nasu_4.6}
Let $\Phi \colon X_1 \to X_2$ be a bipartite code between sofic
shifts $X_1$ and $X_2$.
Let $\hat X_1$ and $\hat X_2$ be the recoded shifts of $X_1$ and $X_2$
respectively, and let $(K_1, \LL_1)$ and $(K_2, \LL_2)$ be the left
Krieger covers of $\hat X_1$ and $\hat X_2$ respectively.
Then there exists a sofic shift $\hat X$ for which the left Krieger cover
is a bipartite labelled graph such that the induced pair of labelled
graphs is $(K_1, \LL_1)$, $(K_2, \LL_2)$ and such that the recoded
bipartite code $\hat \Phi \colon \hat X_1 \to \hat X_2$ of $\Phi$ is
one of the standard bipartite codes $\Phi_\pm$ induced by the left Krieger
cover of $\hat X$ as defined in Remark
\ref{rem_standard_code_of_bipartite_graph}. 
\end{lem}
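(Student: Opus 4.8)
The plan is to realise the shift $\hat X$ explicitly and to compute its left Krieger cover by a direct predecessor-set analysis, following the method of Nasu. After relabelling one may assume $\CC \cap \DD = \emptyset$, and it suffices to treat the case in which $\Phi$ satisfies condition (i) of Definition \ref{def_bipartite_code}, the other case being obtained by replacing $+$ with $-$ throughout. Over the alphabet $\CC \cup \DD$, let $\hat X$ be the shift space consisting of those bi-infinite sequences that alternate between $\CC$ and $\DD$ and whose grouping into consecutive $\CC\DD$-pairs is a point of $\hat X_1$; that is, when such a point is written $z = \cdots c_0 d_0 c_1 d_1 \cdots$ with $c_i \in \CC$ and $d_i \in \DD$, one requires $(c_i d_i)_i \in \hat X_1$ (the other phase of the alternation being a shift of such a point). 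Using that $\Phi$ is a bipartite code satisfying condition (i), one checks that then $(d_i c_{i+1})_i \in \hat X_2$, that every point of $\hat X_2$ arises this way, and that the ``$\CC$-phase read'' and the ``$\DD$-phase read'' of a point of $\hat X$ are exactly related by the recoded bipartite code $\hat\Phi$. Splitting each edge $e$ of the left Krieger cover $(K_1,\LL_1)$ of $\hat X_1$ with $\LL_1(e) = cd$ into an edge labelled $c$ followed by an edge labelled $d$ through a new vertex yields a finite bipartite presentation of $\hat X$; in particular $\hat X$ is sofic.

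The heart of the argument is to identify the left Krieger cover $(K, \LL_K)$ of $\hat X$. Right-rays of $\hat X$ come in two kinds according to whether coordinate $0$ carries a $\CC$- or a $\DD$-letter; call them $\CC$-rays and $\DD$-rays, and note that the left-rays of $\hat X$ that may precede a $\CC$-ray are exactly those ending in a $\DD$-letter, and dually for $\DD$-rays. Grouping $\CC\DD$-pairs is an admissibility-preserving bijection between left-rays of $\hat X$ ending in a $\DD$-letter and left-rays of $\hat X_1$, and likewise between $\CC$-rays of $\hat X$ and right-rays of $\hat X_1$; from this one shows that $z^+ \mapsto P_\infty(\widehat{z^+})$, where $\widehat{z^+}$ denotes the $\CC\DD$-grouping of the $\CC$-ray $z^+$, is a well-defined bijection $\beta_1$ from the predecessor sets of $\CC$-rays onto $K_1^0$, and symmetrically a bijection $\beta_2$ (using $\DD\CC$-grouping) from the predecessor sets of $\DD$-rays onto $K_2^0$. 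Since $\CC \cap \DD = \emptyset$ these two families of predecessor sets are disjoint, so $K^0$ is partitioned as $K_1^0 \sqcup K_2^0$; moreover reading one letter carries a vertex of the first part to a vertex of the second part and back, so $(K, \LL_K)$ is bipartite.

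It remains to show that the induced pair of $(K, \LL_K)$ is $(K_1, \LL_1)$, $(K_2, \LL_2)$. A length-two path $P \xrightarrow{c} Q \xrightarrow{d} P'$ with $P, P'$ in the $K_1^0$-part translates, via $\beta_1$ and $\beta_2$, into a path reading the $\CC\DD$-pair $cd$ out of $\beta_1(P)$ in the left Krieger cover of $\hat X_1$, hence into an edge of $K_1$ from $\beta_1(P)$ to $\beta_1(P')$ labelled $cd$; conversely, every edge of $K_1$ arises from a unique such path. Granting this, the labelled graph induced by $(K, \LL_K)$ on $K_1^0$ is $(K_1, \LL_1)$, and symmetrically the one induced on $K_2^0$ is $(K_2, \LL_2)$. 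Finally, the natural embedding $\CC\DD \to (\CC \cup \DD)^2$, $cd \mapsto cd$, is precisely the bipartite expression used in Remark \ref{rem_standard_code_of_bipartite_graph} to define the standard bipartite codes $\Phi_\pm$ induced by $(K, \LL_K)$, and the formula there gives $\Phi_+\colon (c_id_i)_i \mapsto (d_ic_{i+1})_i$ on $\hat X_1$, which is exactly $\hat\Phi$ under condition (i); thus $\hat\Phi = \Phi_+$ (and $\hat\Phi = \Phi_-$ under condition (ii)), as required.

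I expect the main obstacle to be the verification in the previous paragraph that the induced graphs are exactly $(K_1, \LL_1)$ and $(K_2, \LL_2)$ and not larger labelled graphs — equivalently, that $(K, \LL_K)$ has no ``spurious'' pair of length-two paths $P \xrightarrow{c} Q_1 \xrightarrow{d} P'$ and $P \xrightarrow{c} Q_2 \xrightarrow{d} P'$ with $Q_1 \neq Q_2$, while $K_1$ carries only a single edge $\beta_1(P)\to\beta_1(P')$ labelled $cd$. Ruling this out is where one must exploit the bipartite structure and the predecessor-separation of $K$ together with the admissibility-preserving ray correspondence and the definition of $\hat X$ from $\hat X_1$; this is the technical content of Nasu's Corollary 4.6(1), and I would carry out (or simply invoke) his predecessor-set bookkeeping at this point rather than reprove it from scratch.
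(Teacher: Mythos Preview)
The paper does not supply its own proof of this lemma: it is stated with attribution to Nasu \cite[Cor.~4.6~(1)]{nasu} and used as a black box in the proof of Theorem~\ref{thm_GFC_canonical}. So there is no paper proof to compare against; your sketch is a reasonable reconstruction of how Nasu's argument runs, and ``simply invoke Nasu'' is precisely what the paper does.

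One remark on the content of your sketch: the obstacle you single out at the end --- ruling out two distinct length-two paths $P \xrightarrow{c} Q_1 \xrightarrow{d} P'$ and $P \xrightarrow{c} Q_2 \xrightarrow{d} P'$ with $Q_1 \neq Q_2$ --- is not actually an obstacle. The left Krieger cover $(K,\LL_K)$ is left-resolving, so $P'$ receives at most one edge labelled $d$, forcing $Q_1 = Q_2$. The genuine work lies elsewhere: in checking that the bijections $\beta_1, \beta_2$ really are bijections onto $K_1^0$ and $K_2^0$ (i.e.\ that two $\CC$-rays of $\hat X$ have equal predecessor sets in $\hat X$ if and only if their $\CC\DD$-groupings have equal predecessor sets in $\hat X_1$, and that every vertex of $K_1^0$ arises this way), and in verifying surjectivity of the map from length-two paths in $K$ to edges of $K_1$. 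Both follow from your admissibility-preserving ray correspondence, but that correspondence is doing all the lifting, not the injectivity you flagged.
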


The proof of the following theorem is very similar to the proof of the
corresponding result by Nasu \cite[Thm. 3.3]{nasu} for the left Krieger cover.

\begin{thm} \label{thm_GFC_canonical}
The generalized left Fischer cover is canonical.
\end{thm}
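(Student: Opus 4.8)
The plan is to mimic Nasu's proof that the left Krieger cover is canonical \cite[Thm. 3.3]{nasu}. First I would use Theorem \ref{thm_nasu_decomposition} to write an arbitrary conjugacy between sofic shifts as a product of bipartite codes; since a composition of conjugacies that are individually compatible with the covering maps of the generalized left Fischer covers is again compatible, it suffices to treat a single bipartite code $\Phi \colon X_1 \to X_2$. Recoding $X_1$ and $X_2$ to $\hat X_1$ and $\hat X_2$ via the bipartite expressions only changes the left Krieger cover — and hence the generalized left Fischer cover, which is the subgraph of the left Krieger cover induced by a certain set of vertices — by a relabelling, because these recodings are one-block conjugacies; so it is enough to show that the recoded bipartite code $\hat \Phi \colon \hat X_1 \to \hat X_2$ is compatible with the covering maps of the generalized left Fischer covers of $\hat X_1$ and $\hat X_2$. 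I would then invoke Lemma \ref{lem_nasu_4.6} to obtain a sofic shift $\hat X$ whose left Krieger cover $(K, \LL_K)$ is bipartite with induced pair $(K_1, \LL_1)$, $(K_2, \LL_2)$ equal to the left Krieger covers of $\hat X_1$ and $\hat X_2$, and for which $\hat \Phi$ is one of the standard bipartite codes $\Phi_\pm$ induced by $(K, \LL_K)$.

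The core of the argument — and the step I expect to require the most care — is to prove that the generalized left Fischer cover $(G, \LL_G)$ of $\hat X$ is itself a bipartite labelled graph whose induced pair is the pair of generalized left Fischer covers of $\hat X_1$ and $\hat X_2$. As $(G, \LL_G)$ is the subgraph of $(K, \LL_K)$ induced by $G^0 \subseteq K^0$, it is automatically bipartite for the bipartition inherited from $K$; what must be checked is that $G^0$ meets each part of $K^0$ in exactly the vertex set $G_i^0$ of the corresponding generalized left Fischer cover and that the induced length-$2$ edges match up. Here I would use that $\hat X$ alternates between the two halves of its alphabet, so that the predecessor set of a right-ray beginning in one half consists only of left-rays ending in the other half. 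This forces any decomposition $P = \bigcup_{Q \in V} Q$ of a vertex $P$ lying in one part of $K^0$ to involve only vertices from that same part; and under the natural bijection between left-rays of $\hat X$ ending in the relevant half and left-rays of $\hat X_i$ — which is precisely the identification of that part of $K^0$ with the vertex set of $K_i$ — such decompositions correspond exactly to decompositions in $K_i$. Hence a part-$i$ vertex is non-decomposable in $K$ if and only if it is non-decomposable in $K_i$. The reachability clause of the definition of $G^0$ transfers similarly: the edges of $K_i$ are the length-$2$ paths of $K$ between part-$i$ vertices, the left Krieger cover is essential, and Lemma \ref{lem_GLFC_essential} lets me adjust a path witnessing reachability near its non-decomposable endpoint so as to control its parity, from which it follows that a part-$i$ vertex reaches a non-decomposable vertex in $K$ exactly when it reaches one in $K_i$. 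Thus $G^0 \cap (K^0)_i = G_i^0$. For the edges, I would note that if a length-$2$ path of $K$ has both endpoints in $G^0$, then its middle vertex reaches the second endpoint, which reaches a non-decomposable vertex, so the middle vertex also lies in $G^0$; hence the length-$2$ paths of $G$ between part-$i$ vertices are exactly the edges of $G_i$, which gives the claim.

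Granting this, the rest goes as in Nasu's argument. Since $(G, \LL_G)$ is bipartite with induced pair the generalized left Fischer covers $(G_1, \LL_{G_1})$, $(G_2, \LL_{G_2})$, and $G_i$ presents $\hat X_i$ with labels inherited from $K$, Remark \ref{rem_standard_code_of_bipartite_graph} produces standard bipartite codes $\varphi_\pm \colon \X_{G_1} \to \X_{G_2}$ with $\Phi_\pm \circ \pi_1 = \pi_2 \circ \varphi_\pm$, where $\pi_i \colon \X_{G_i} \to \hat X_i$ is the covering map of the generalized left Fischer cover of $\hat X_i$. The maps $\Phi_\pm$ appearing here depend only on the inherited labels and on the presented shifts $\hat X_1$, $\hat X_2$ — not on the underlying graph — so they coincide with the standard bipartite codes induced by $(K, \LL_K)$; in particular $\hat \Phi$ is one of them, so $\hat \Phi \circ \pi_1 = \pi_2 \circ \varphi_\pm$. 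Transporting this back through the one-block recoding conjugacies shows that the generalized left Fischer cover is canonical with respect to $\Phi$, and composing over the bipartite decomposition of an arbitrary conjugacy completes the proof.
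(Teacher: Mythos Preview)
Your proposal is correct and follows essentially the same route as the paper's proof: reduce to a single bipartite code via Theorem~\ref{thm_nasu_decomposition}, apply Lemma~\ref{lem_nasu_4.6} to obtain the bipartite left Krieger cover of $\hat X$, verify that the generalized left Fischer cover of $\hat X$ is bipartite with induced pair equal to the generalized left Fischer covers of $\hat X_1$ and $\hat X_2$ (using Lemma~\ref{lem_GLFC_essential} for the parity adjustment when the non-decomposable endpoint lies in the wrong part), and then read off the compatible bipartite code on the edge shifts from Remark~\ref{rem_standard_code_of_bipartite_graph}. Your justification of why non-decomposability transfers between $K$ and $K_i$ via the alternating-alphabet structure, and your explicit check that the middle vertex of a length-$2$ path between $G^0$-vertices lies in $G^0$, are details the paper leaves more implicit, but the overall argument is the same.
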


\begin{proof}
Let $\Phi \colon X_1 \to X_2$ be a bipartite code. 
Let $\hat X_1, \hat X_2$ be the recoded shifts, let $(K_1, \LL_1)$, $(K_2,
\LL_2)$ be the corresponding left Krieger covers, and let $\hat \Phi
\colon \hat X_1 \to \hat X_2$ be the recoded bipartite code. 
Use Lemma \ref{lem_nasu_4.6} to find a sofic shift $\hat X$ such that the
left Krieger cover $(K, \LL)$ of $\hat X$ is a bipartite labelled
graph for which 
the induced pair of labelled graphs is $(K_1, \LL_1)$, $(K_2, \LL_2)$.
Let $(G_1, \LL_1)$, $(G_2, \LL_2)$, and $(G, \LL)$ be the
generalized left Fischer covers of respectively $\hat X_1$, $\hat X_2$, and
$\hat X$.

The labelled graph $(G, \LL)$ is bipartite since $G$ is a
subgraph of $K$. Note that a predecessor set $P$ in $K_1^0$ or $K_2^0$ is
decomposable if and only if the corresponding predecessor set in $K^0$
is decomposable. 
If $i \in \{1,2\}$ and $Q \in G_i^0 \subseteq K_i^0$ then there is a
path in $K_i$ from $Q$ to a non-decomposable $P \in K_i^0$.
By considering the corresponding path in $K$, it is clear that the
vertex in $K^0$ corresponding to $Q$ is in $G^0$.
Conversely, if $Q \in G^0$ then there is a path in $K$ from $Q$ to a
non-decomposable $P \in K^0$.
If $P$ and $Q$ belong to the same partition $K_i^0$ then the vertex in
$K_i$ corresponding to $Q$ is in $G_i^0$ by definition.
On the other hand, if $Q$ corresponds to a vertex in $K_i$ and if $P$ 
belongs to the other partition then Lemma \ref{lem_GLFC_essential}
shows that there exists a non-decomposable $P'$ in the same partition
as $Q$ and an edge from $P$ to $P'$ in $K$. Hence, there is also a
path in $K_i$ from the vertex corresponding to $Q$ to the vertex
corresponding to $P'$, so $Q \in G_i^0$. This proves that the pair of
induced labelled graphs of $(G, \LL)$ is $(G_1, \LL_1)$, $(G_2,
\LL_2)$.   

Let $\hat \Psi_\pm \colon \hat X_1 \to \hat X_2$ be the standard
bipartite codes induced by $(G, \LL)$.
Remark \ref{rem_standard_code_of_bipartite_graph} shows that
there exist bipartite codes $\hat \psi_\pm \colon \X_{G_1} \to \X_{G_2}$
such that $\hat \Psi_\pm \circ \hat \pi_1|_{\X_{G_1}} = \hat
\pi_2|_{\X_{G_2}} \circ \hat \psi_\pm$.
The labelled graph $(G, \LL)$ presents the same sofic shift as $(K,
\LL)$, so they both induce the same standard bipartite codes from
$\hat X_1$ to $\hat X_2$, and by Lemma \ref{lem_nasu_4.6},
$\hat \Phi$ is one of these standard bipartite codes, so $\hat
\Phi = \hat \Psi_+$ or $\hat \Phi = \hat \Psi_-$. In particular, there
exists a bipartite code $\hat \psi \colon \X_{G_1} \to \X_{G_2}$ such
that $\hat \Phi \circ \hat \pi_1|_{\X_{G_1}} = \hat \pi_2|_{\X_{G_2}}
\circ \hat \psi$. 

By recoding $\hat X_1$ to $X_1$ and $\hat X_2$ to $X_2$ via
the bipartite expressions inducing $\Phi$,
this gives a bipartite code $\psi$ 
such that $\Phi \circ \pi_1 = \pi_2 \circ \psi$ when $\pi_1, \pi_2$
are the covering maps of the generalized left Fischer covers of $X_1$
and $X_2$ respectively.
By Theorem \ref{thm_nasu_decomposition}, any conjugacy can be
decomposed as a product of bipartite codes, so this proves that the
generalized left Fischer cover is canonical. 
\end{proof}


\begin{thm} \label{thm_GFC_FE}
If $X_1, X_2$ are flow equivalent sofic shifts with genera\-lized left
Fischer covers $(G_1, \LL_1)$ and $(G_2, \LL_2)$, respectively,   
then the covering maps $\pi_1 \colon \X_{G_1} \to X_1$ and $\pi_2
\colon \X_{G_2} \to X_2$ are flow equivalent, i.e. there exist flow
equivalences $\varphi \colon X_1 \to X_2$ and $\psi \colon \X_{G_1}
\to \X_{G_2}$ such that $\varphi \circ \pi_1= \pi_2 \circ \psi$
\end{thm}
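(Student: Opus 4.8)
The plan is to reduce flow equivalence to conjugacy plus symbol expansion, exactly as Parry and Sullivan's theorem lets us do, and then invoke Theorem~\ref{thm_GFC_canonical} for the conjugacy part while handling symbol expansion by a direct, explicit construction on the generalized left Fischer cover. Since flow equivalence is the equivalence relation generated by conjugacy and symbol expansion, it suffices to prove the statement when $\varphi \colon X_1 \to X_2$ is a conjugacy and when $\varphi$ is a single symbol expansion; the general case then follows by composing the intermediate flow equivalences $\psi$ on the covers (composition of flow equivalences of the covers is again a flow equivalence, and the commuting squares paste together).

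\textbf{Conjugacy case.} If $\varphi$ is a conjugacy, then Theorem~\ref{thm_GFC_canonical} gives a conjugacy $\psi \colon \X_{G_1} \to \X_{G_2}$ with $\varphi \circ \pi_1 = \pi_2 \circ \psi$, and a conjugacy is in particular a flow equivalence, so we are done.

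\textbf{Symbol expansion case.} Here we must show that performing a symbol expansion on $X_1$ corresponds to performing a symbol expansion on $\X_{G_1}$. Concretely, suppose $X_2$ is obtained from $X_1$ by replacing every occurrence of the symbol $a \in \AA_1$ by a word $a'a''$ over a new alphabet; I would build the generalized left Fischer cover of $X_2$ from $(G_1,\LL_1)$ by subdividing every edge $e$ labelled $a$ into a length-two path $e'e''$ through a new vertex, labelled $a'a''$, leaving all other edges and vertices untouched. The key point to verify is that this subdivided labelled graph is \emph{exactly} $(G_2,\LL_2)$: one checks that the predecessor sets of right-rays of $X_2$ correspond bijectively to those of $X_1$ together with the ``intermediate'' predecessor sets created at the midpoints of subdivided edges, that non-decomposability is preserved under this correspondence, and that the reachability condition $P \geq P'$ for non-decomposable $P'$ is likewise preserved (a new midpoint vertex inherits reachability from the head of its edge). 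Granting this identification, the subdivision of edges in $\X_{G_1}$ realizing $\X_{G_2}$ is literally a composition of symbol expansions on the cover, hence a flow equivalence $\psi \colon \X_{G_1} \to \X_{G_2}$, and the square $\varphi \circ \pi_1 = \pi_2 \circ \psi$ commutes because subdivision is compatible with the labelling maps by construction.

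\textbf{The main obstacle} will be the bookkeeping in the symbol-expansion step: verifying that the subdivided left Krieger cover of $X_1$ is the left Krieger cover of $X_2$ (this is essentially Nasu's analysis and can be cited or adapted), and then checking that the generalized left Fischer cover construction---the passage to non-decomposable vertices and their predecessors---commutes with this subdivision. The decomposability bookkeeping is delicate because a union $P = \bigcup Q_i$ in $K_1^0$ must be shown to correspond to a union of the associated vertices in the subdivided cover, and vice versa; Lemma~\ref{lem_GLFC_essential} and the reachability argument from the proof of Theorem~\ref{thm_GFC_canonical} are the tools that make this go through. A cleaner alternative, which I would pursue if the direct approach gets unwieldy, is to observe that symbol expansion of $X_1$ is itself presented by a bipartite labelled graph whose induced pair is $(G_1,\LL_1)$ and a trivially-recoded copy of $(G_2,\LL_2)$, and then feed this into Lemma~\ref{lem_nasu_4.6} and the argument of Theorem~\ref{thm_GFC_canonical} verbatim, extracting the flow equivalence on the covers from the standard bipartite codes; this packages the combinatorics into machinery already established in Section~\ref{sec_gfc}.
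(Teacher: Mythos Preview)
Your proposal is correct and follows essentially the same route as the paper: reduce to conjugacy (handled by Theorem~\ref{thm_GFC_canonical}) plus symbol expansion (handled by showing the generalized left Fischer cover is obtained by subdividing $a$-edges), then compose. The paper's proof is terser only because it outsources both the symbol-expansion compatibility of the left Krieger cover and the general principle ``canonical $+$ respects symbol expansion $\Rightarrow$ flow-equivalence invariant'' to \cite{boyle_carlsen_eilers}, and dismisses with a ``clearly'' precisely the decomposability bookkeeping you flag as the main obstacle; your explicit plan for that step (and the observation that Lemma~\ref{lem_GLFC_essential} is the relevant tool) is sound.
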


\begin{proof}
In \cite{boyle_carlsen_eilers} it is proved that the left Krieger
cover respects symbol expansion: If $X$ is a sofic shift with
alphabet $\AA$, $a \in \AA$, $\bullet$ is some symbol not in $\AA$, and
if $\hat X$ is obtained from $X$ via a symbol expansion which
inserts a $\bullet$ after each $a$ then the left Krieger cover of $\hat
X$ is obtained by replacing each edge labelled $a$ in the left
Krieger cover of $X$ by two edges in sequence labelled $a$ and $\bullet$
respectively. Clearly, the generalized left Fischer cover inherits
this property. By \cite{boyle_carlsen_eilers}, any canonical cover
which respects flow equivalence has the desired property, so the result follows from Theorem \ref{thm_GFC_canonical}.
\end{proof}

\section{Foundations and layers of covers}
\label{sec_foundation}

Let $\EE = (E, \LL)$ be a finite left-resolving and predecessor-separated labelled graph.
For each $V \subseteq E^0$ and each word $w$ over the alphabet $\AA$ of $\LL$  define 
\begin{displaymath}
   wV = \{ u \in E^0 \mid u \textrm{ is the source of a path labelled } w \textrm{ terminating in } V \}.
\end{displaymath}

\begin{definition}
Let $S$ be a subset of the power set $\PP(E^0)$, and let ${\sim}$ be an equivalence relation on $S$.
The pair $(S,{\sim})$ is said to be \emph{past closed} if 
\begin{itemize}
\item $\{ v \} \in S$,
\item $\{ u \} \sim \{ v \}$ implies $u = v$,
\item $aV \neq \emptyset$ implies $aV \in S$, and
\item $U \sim V$ and $aU \neq  \emptyset$ implies $aV \neq \emptyset$ and $aU \sim aV$
\end{itemize}
for all $u,v \in E^0$, $U,V \in S$, and $a \in \AA$.
\end{definition}

Let $(S, {\sim})$ be past closed. For each $V \in S$, let $\class V$ denote the equivalence class of $V$ with respect to ${\sim}$.
When $a \in \AA$ and $V \in S$, $\class V$ is said to receive $a$ if $aV \neq \emptyset$.
For each $\class V \in S / {\sim}$, define $\lvert \class V \rvert = \min_{V \in \class V} \lvert V \rvert$. 

\begin{definition}
Define $\GG(\EE, S, {\sim})$ to be the labelled graph with vertex set $S / {\sim}$ for which there is an edge labelled $a$ from $\class{aV}$ to $\class V$ whenever $\class V$ receives $a$.
For each $n \in \N$, the $n$\emph{th layer} of $\GG(\EE, S,{\sim})$ is the labelled subgraph induced by $S_n = \{ \class V \in S / {\sim} \mid  n = \lvert \class V \rvert \}$. 
$\EE$ is said to be a \emph{foundation} of any labelled graph isomorphic to $\GG(\EE, S, {\sim})$.
 \end{definition}

If a labelled graph $\HH$ is isomorphic to $\GG(\EE, S, {\sim})$ then the subgraph of $\HH$ corresponding to the $n$th layer of $\GG(\EE, S, {\sim})$ is be said to be the $n$\emph{th layer of} $\HH$ \emph{with respect to} $\EE$, or simply the $n$th layer if $\EE$ is understood from the context.

\begin{prop}
$\EE$ and $\GG(\EE, S,{\sim})$ present the same sofic shift, and
$\EE$ is labelled graph isomorphic to the first layer of $\GG(\EE, S,{\sim})$. 
\end{prop}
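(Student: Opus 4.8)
The plan is to describe the paths of $\GG(\EE,S,{\sim})$ through the operation $V\mapsto wV$ and then read off both halves of the statement. The first observation is that the last two clauses in the definition of \emph{past closed} say precisely that ``$\class V$ receives $a$'' and the source $\class{aV}$ of the corresponding $a$-edge depend only on the class $\class V$ and not on the chosen representative; so in $\GG(\EE,S,{\sim})$ there is an $a$-edge terminating at $\class V$ if and only if $aV\neq\emptyset$, and it then runs from $\class{aV}$ to $\class V$. Writing $wV=w_1(w_2\cdots w_nV)$ and using the third clause (each intermediate set $w_k\cdots w_nV$ lies in $S$ as long as it is nonempty), a straightforward induction on $|w|$ yields: for every word $w$ and every $V\in S$ there is a path labelled $w$ in $\GG(\EE,S,{\sim})$ terminating at $\class V$ if and only if $wV\neq\emptyset$, in which case at least one such path exists and every such path starts at $\class{wV}$.

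From this the equality of the presented shifts is immediate. A word $w$ labels a path in $E$ ending at a vertex $v$ exactly when $w\{v\}\neq\emptyset$, and since $\{v\}\in S$ this is, by the characterization above, the same as there being a path labelled $w$ in $\GG(\EE,S,{\sim})$ (ending at $\class{\{v\}}$); conversely a path labelled $w$ in $\GG(\EE,S,{\sim})$ ends at some $\class V$ with $wV\neq\emptyset$, and hence gives a path labelled $w$ in $E$. So $\EE$ and $\GG(\EE,S,{\sim})$ have the same set of finite path labels, hence the same language, and therefore present the same sofic shift.

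For the first layer, the first two clauses make $v\mapsto\class{\{v\}}$ a well-defined injection of $E^0$ into $S/{\sim}$, and its image is exactly the vertex set $S_1$ of the first layer, because $|\class V|=1$ forces some member of $\class V$ to have cardinality one, i.e.\ to be a singleton, and two singletons lying in the same class coincide. On edges: since $\EE$ is left-resolving, $a\{v\}$ has at most one element, so by the path characterization there is an $a$-edge from $\class{\{u\}}$ to $\class{\{v\}}$ in $\GG(\EE,S,{\sim})$ precisely when $a\{v\}=\{u\}$, i.e.\ precisely when $E$ has an $a$-edge from $u$ to $v$; moreover both $E$ and $\GG(\EE,S,{\sim})$ carry at most one edge with a prescribed label and terminal vertex (left-resolvingness for $E$, the construction for $\GG(\EE,S,{\sim})$). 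Hence $v\mapsto\class{\{v\}}$ extends to a label-preserving bijection of edges, i.e.\ to the desired labelled graph isomorphism of $\EE$ onto the first layer. The only step requiring genuine care is the induction of the first paragraph — checking that $V\mapsto aV$ and ``receives $a$'' descend to $S/{\sim}$ and that the auxiliary sets $w_k\cdots w_n V$ remain in $S$ — after which everything is bookkeeping.
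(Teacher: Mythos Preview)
Your proof is correct and follows essentially the same approach as the paper's: establish the bijection $v\mapsto\class{\{v\}}$ onto the first layer, match edges via $a\{v\}=\{u\}$, and compare languages. The paper's argument is terser --- it simply asserts that ``every finite word presented by $\GG(\EE,S,{\sim})$ is also presented by $\EE$'' --- whereas you make this explicit by first proving the path characterization (a path labelled $w$ ends at $\class V$ iff $wV\neq\emptyset$) via induction; this is a genuine improvement in clarity, not a different method.
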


\begin{proof}
By assumption, there is a bijection between $E^0$ and the set of vertices in the first layer of $\GG(\EE, S,{\sim})$. By construction, there is an edge labelled $a$ from $u$ to $v$ in $\EE$ if and only if there is an edge labelled $a$ from $\class{\{u\}}$ to $\class{\{v\}}$ in $\GG(\EE, S,{\sim})$.
Every finite word presented by $\GG(\EE, S, {\sim})$ is also presented by $\EE$, so they present the same sofic shift.
\end{proof}

The following proposition motivates the use of the term layer by showing that edges can never go from higher to lower layers.

\begin{prop}
\label{prop_layers}
If $\class V \in S/ {\sim}$ receives $a \in \AA$ then $\lvert \class{aV} \rvert  \leq{} \lvert \class V \rvert$. If
$\GG(\EE, S,{\sim})$ has an edge from a vertex in the $m$th layer to a vertex in the $n$th layer then $m \leq n$.
\end{prop}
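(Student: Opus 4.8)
The plan is to reduce the second statement to the first, and to prove the first by combining the left-resolving property of $\EE$ with the past-closed axioms. The crucial elementary observation is that, because $\EE$ is left-resolving, for every $V \subseteq E^0$ and every $a \in \AA$ one has $\lvert aV \rvert \leq \lvert V \rvert$: indeed, $aV$ is the image under the source map of the set of edges labelled $a$ whose range lies in $V$, and each vertex of $V$ receives at most one edge labelled $a$, so that set has at most $\lvert V \rvert$ elements.

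To prove $\lvert \class{aV} \rvert \leq \lvert \class{V} \rvert$ when $\class{V}$ receives $a$, I would first choose a representative $V' \in \class{V}$ with $\lvert V' \rvert = \lvert \class{V} \rvert$. Since $\class{V}$ receives $a$ we have $aV \neq \emptyset$; applying the fourth past-closed axiom to the pair $V \sim V'$ then yields $aV' \neq \emptyset$ and $aV \sim aV'$, so $\class{aV} = \class{aV'}$, and by the third axiom $aV' \in S$. Hence $\lvert \class{aV} \rvert = \lvert \class{aV'} \rvert \leq \lvert aV' \rvert \leq \lvert V' \rvert = \lvert \class{V} \rvert$, where the first inequality is the definition of $\lvert \cdot \rvert$ on equivalence classes and the second is the elementary observation above.

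For the second statement, every edge of $\GG(\EE, S, {\sim})$ is, by construction of that graph, an edge labelled some $a \in \AA$ from $\class{aV}$ to $\class{V}$ for a class $\class{V}$ that receives $a$. If such an edge has its source $\class{aV}$ in the $m$th layer and its range $\class{V}$ in the $n$th layer, then $m = \lvert \class{aV} \rvert$ and $n = \lvert \class{V} \rvert$, so $m \leq n$ by the first part.

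I do not expect a genuine obstacle here; the only point requiring care is the bookkeeping with equivalence classes, namely that one must pass to a minimal-cardinality representative of $\class{V}$ before invoking the left-resolving cardinality bound, and the fourth past-closed axiom is exactly what guarantees that doing so does not change the class $\class{aV}$.
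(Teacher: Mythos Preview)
Your proof is correct and follows essentially the same approach as the paper: pick a minimal-cardinality representative of $\class{V}$, use that $\EE$ is left-resolving to bound $\lvert aV'\rvert \leq \lvert V'\rvert$, and deduce the second statement from the first via the definition of edges in $\GG(\EE,S,{\sim})$. The paper's version is terser---it silently replaces $V$ by a minimal representative and does not spell out the appeal to the fourth past-closed axiom---whereas you make that bookkeeping explicit, which is arguably cleaner.
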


\begin{proof}
Choose $V \in \class V$ such that $\lvert V \rvert = \lvert \class V \rvert$. Each $u \in aV$ emits at least one edge labelled $a$ terminating in $V$, and $\EE$ is left-resolving, so $\lvert \class{aV} \rvert \leq \lvert aV \rvert \leq \lvert V \rvert = \lvert \class V \rvert$. The second statement follows from the definition of $\GG(\EE, S,{\sim})$.
\end{proof}

\begin{example}
Let $(F, \LL_F)$ be the left Fischer cover of an irreducible sofic shift $X$. 
For each $x^+ \in X^+$, define $s(x^+) \subseteq F^0$ to be the set of 
vertices where a presentation of $x^+$ can start. $S = \{ s(x^+) \mid x^+ \in X^+ \} \subseteq \PP(F^0)$ is past closed since each vertex in the left Fischer cover is the predecessor set of an intrinsically synchronizing right-ray, so the \emph{multiplicity set cover} of $X$ can be defined to be $\GG((F, \LL_F), S, =)$.
 An analogous cover can be defined by considering the vertices where presentations of finite words can start.
Thomsen \cite{thomsen} constructs the
derived shift space $\partial X$ of $X$
using
right-resolving graphs, but an analogous construction works for left-resolving graphs.
The procedure from \cite[Example 6.10]{thomsen} shows that this
$\partial X$ is presented by the labelled graph obtained by removing the left Fischer cover from the multiplicity set cover.
\end{example}

Let $X$ be a sofic shift, and let $(K, \LL_K)$ be the left Krieger cover of $X$. In order to use the preceding results to investigate the structure of the left Krieger cover and the past set cover, define an equivalence relation on $\PP(K^0)$ by $U \sim_\cup V$ if and only if $\bigcup_{P \in U} P = \bigcup_{Q \in V} Q$. Clearly, $\{ P \} \sim_\cup \{ Q \}$ if and only if $P = Q$. If $U, V \subseteq K^0$, $a \in \AA$, $aV \neq \emptyset$, and $U \sim_\cup V$ then $aU \sim_\cup aV$ by the definition of the left Krieger cover.

\begin{thm}
\label{thm_gfc_foundation_lkc}
For a sofic shift $X$, the generalized left Fischer cover $(G, \LL_G)$ is a foundation of the left Krieger cover $(K, \LL_K)$, and no smaller subgraph is a foundation.
\end{thm}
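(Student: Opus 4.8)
The plan is to produce an explicit past‑closed pair $(S,{\sim})$ on subsets of $G^0$ with $\GG((G,\LL_G),S,{\sim})\cong(K,\LL_K)$, and then to show that every foundation of $(K,\LL_K)$ contains $(G,\LL_G)$. Throughout I identify a vertex $v$ of $(G,\LL_G)$ with the predecessor set $P_\infty^K(v)\in K^0$ it represents. The first thing I would establish — and the technical pivot of the whole argument — is that $P_\infty^G(v)=P_\infty^K(v)=v$ for every $v\in G^0$: this holds because $G^0$ is closed under the relation ``$u\geq v$ in $K$'' (if $u\geq v$ and $v\geq P'$ with $P'$ non‑decomposable then $u\geq P'$, so $u\in G^0$), hence every path of $K$ terminating in $G^0$ already lies in $G$, so a $K$‑presentation of a left ray $y^-\in v=P_\infty^K(v)$ ending at $v$ is automatically a $G$‑presentation. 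With this in hand I set, for $x^+\in X^+$, $s(x^+)=\{v\in G^0\mid x^+\text{ has a presentation in }G\text{ starting at }v\}$, take $S$ to be the smallest family of subsets of $G^0$ containing all singletons $\{v\}$ and all sets $s(x^+)$ and closed under $V\mapsto aV$ whenever $aV\neq\emptyset$, and let ${\sim}$ be the restriction of $\sim_\cup$ to $S$. The bridge back to $K$ is the map $\Psi\colon S\to\PP(X^-)$, $\Psi(V)=\bigcup_{v\in V}v$.

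The core computation, which I would phrase as a \emph{Key Lemma}, is: for any nonempty $V\subseteq G^0$ and $a\in\AA$, $\Psi(aV)=\{z^-\in X^-\mid z^- a\in\Psi(V)\}$. The inclusion $\subseteq$ is immediate from $G\subseteq K$ and $P_\infty^K(u)=u$; the reverse inclusion is exactly where $P_\infty^G(v)=v$ is used — given $z^- a\in\Psi(V)$, pick $v\in V$ with $z^- a\in v$, take a $G$‑presentation of $z^- a$ terminating at $v$ (it exists because $P_\infty^G(v)=v$), and read off its final edge $u\xrightarrow{a}v$, so $u\in aV$ and $z^-\in P_\infty^G(u)=u$. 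Combined with the elementary identity $\{z^-\mid z^- a\in P_\infty(x^+)\}=P_\infty(ax^+)$, the Key Lemma gives, by induction on the construction of $S$: $\Psi(V)\in K^0$ for all $V\in S$ (the base cases are $\Psi(\{v\})=v$ and $\Psi(s(x^+))=P_\infty(x^+)$, the latter obtained by splitting a bi‑infinite $G$‑presentation of $z^- x^+$ at position $0$ and, conversely, concatenating); $aV\neq\emptyset$ iff $\Psi(V)$ receives $a$ in $K$, and then $\Psi(aV)$ is the source of the unique $a$‑edge of $K$ into $\Psi(V)$; and $\Psi(aV)$ depends only on $\Psi(V)$. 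From this, $(S,{\sim})$ is past closed — the only nontrivial axiom ($U\sim V$, $aU\neq\emptyset\Rightarrow aV\neq\emptyset$, $aU\sim aV$) follows because $\Psi(aU)$ and $\Psi(aV)$ are determined by $\Psi(U)=\Psi(V)$, while the remaining axioms use that $(G,\LL_G)$ is predecessor‑separated. Finally $\Psi$ descends to $\overline{\Psi}\colon S/{\sim}\to K^0$, which is injective since $U\sim V\iff\Psi(U)=\Psi(V)$, surjective since the $\Psi(s(x^+))$ exhaust $K^0$, and a labelled‑graph isomorphism because an edge $[aV]\xrightarrow{a}[V]$ of $\GG((G,\LL_G),S,{\sim})$ exists precisely when $\Psi(V)$ receives $a$ in $K$, in which case $\overline{\Psi}$ carries it to the $a$‑edge of $K$ from $\Psi(aV)$ to $\Psi(V)$, the matching being bijective on edges since both graphs are left‑resolving. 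This proves $(G,\LL_G)$ is a foundation of $(K,\LL_K)$.

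For the minimality statement, I would take any foundation $\EE$ of $(K,\LL_K)$. By the proposition characterizing foundations, $\EE$ presents $X$ and is isomorphic to the first layer of $\GG(\EE,S',{\sim}')$; transporting everything across the isomorphism $\GG(\EE,S',{\sim}')\cong(K,\LL_K)$ and renaming, I may assume $\EE$ is the induced subgraph of $K$ on some $E^0\subseteq K^0$, with the first‑layer vertex $[\{v\}]$ sent to $v$. Then Lemma \ref{lem_decomposable} forces $E^0$ to contain every non‑decomposable vertex. The extra mileage from ``foundation'' (rather than merely ``presents $X$'') comes from the first layer: a first‑layer vertex $[\{v\}]$ receives $a$ in $\GG(\EE,S',{\sim}')$ iff $a\{v\}\neq\emptyset$ in $\EE$, i.e.\ iff $v$ receives $a$ in $\EE$, so the isomorphism forces $v\in E^0$ to receive $a$ in $K$ exactly when it receives $a$ in $\EE$. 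Since $K$ is left‑resolving, the $a$‑edge of $K$ into $v$ is unique with a single source $u$, and $v$ receives $a$ in $\EE$ iff $u\in E^0$; hence $u\xrightarrow{a}v$ in $K$ with $v\in E^0$ implies $u\in E^0$, so $E^0$ is closed under ``$u\geq v$''. Together with $E^0\supseteq\{\text{non-decomposable vertices}\}$ and the fact that every $P\in G^0$ satisfies $P\geq P'$ for some non‑decomposable $P'$, this yields $G^0\subseteq E^0$. Thus every foundation of $(K,\LL_K)$ contains $(G,\LL_G)$, which is therefore the smallest one, so no smaller subgraph is a foundation.

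The hard part will be the Key Lemma — transferring the combinatorial operation $V\mapsto aV$ in $G$ to the predecessor‑set operation in $K$ — since everything there funnels through the identity $P_\infty^G(v)=P_\infty^K(v)$ for $v\in G^0$, which in turn rests on $G^0$ being co‑hereditary under reachability. In Part 2 the subtle point is recognizing that ``$\EE$ presents $X$'' alone is too weak (Lemma \ref{lem_decomposable} only delivers the non‑decomposable vertices) and that one must exploit the edge structure of the first layer to force $E^0\supseteq G^0$.
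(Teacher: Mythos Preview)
Your argument is correct and, for the first half, is the paper's own argument written out in detail: the paper takes $S=\{V\subseteq G^0\mid \exists\, x^+\in X^+,\ P_\infty(x^+)=\bigcup_{P\in V}P\}$ together with the same equivalence $\sim_\cup$, verifies past--closedness via the identity $\bigcup_{P\in aV}P=P_\infty(ax^+)$ (precisely your Key Lemma), and exhibits the bijection $[V]\mapsto\bigcup_{P\in V}P$ onto $K^0$. Your $S$ is formally smaller (generated by singletons and the sets $s(x^+)$ and closed under $V\mapsto aV$), but every class in the paper's $S/{\sim_\cup}$ contains some $s(x^+)$, so the two quotients coincide and the resulting $\GG$'s are the same. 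Your preliminary observation that $G^0$ is upward closed under $\geq$ in $K$, so that $P_\infty^G(v)=P_\infty^K(v)=v$, is exactly the ingredient that makes the Key Lemma work and is implicit in the paper's one--line verification.

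For minimality the paper simply writes ``It follows from Lemma~\ref{lem_decomposable}'', which, as you correctly point out, only forces a foundation that sits as a subgraph of $K$ to contain the \emph{non-decomposable} vertices. Since $G^0$ can contain decomposable vertices (indeed the text notes, just after Proposition~\ref{prop_GLFC_essential}, that $(G,\LL_G)$ need not be a minimal left-resolving presentation of $X$), ``presents $X$'' alone does not finish the job. Your first--layer argument --- that under the isomorphism $\GG(\EE,S',{\sim}')\cong K$ the class $[\{v\}]$ receives $a$ iff $v$ receives $a$ in $\EE$, hence $E^0$ is upward closed under $\geq$ in $K$ --- supplies precisely the missing step: combined with Lemma~\ref{lem_decomposable} it yields $G^0\subseteq E^0$. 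This is a genuine strengthening of what the paper wrote, and it uses the foundation structure (not merely the presentation property) in an essential way.
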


\begin{proof}
Define $S = \{ V \subseteq G^0 \mid \exists x^+ \in X^+ \textrm{ such that } P_\infty(x^+) = \bigcup_{P \in V} P \}$. 
Note that $\{ P \} \in S$ for every $P \in G^0$.
If $x^+ \in X^+$ with $P_\infty(x^+) = \bigcup_{P \in V} P$ and if 
$aV \neq \emptyset$ for some $a \in \AA$ then $ax^+ \in X^+$ and $P_\infty(ax^+) = \bigcup_{P \in aV} P$.
This proves that  the pair $(S, {\sim_\cup})$ is past closed, so 
$\GG((G,\LL_G), S, {\sim_\cup})$ is well defined.
Since $(G, \LL_G)$ is a presentation of $X$, there is a bijection $\varphi \colon S / {\sim_\cup} \to K^0$ defined by $\varphi( \class V ) = \bigcup_{P \in V} P$.
By construction, there is an edge labelled $a$ from $\class U$ to $\class V$ in $\GG((G,\LL_G), S, {\sim_\cup})$ if and only if there exists $x^+ \in X^+$ such that
$P_\infty(ax^+) = \bigcup_{P \in U} P$ and $P_\infty(x^+) = \bigcup_{Q \in V} Q$, so $\GG((G,\LL_G), S, {\sim_\cup})$ is isomorphic to $(K, \LL_K)$.
It follows from Lemma \ref{lem_decomposable} that no proper subgraph of $(G, \LL_G)$ can be a foundation of the left Krieger cover.
\end{proof}

The example from \cite[Section 4]{carlsen_matsumoto} shows that the left Krieger cover can be a proper subgraph of the past set cover. The following lemma will be used to further investigate this relationship.

\begin{lem}
\label{lem_prefix_with_same_past}
Let $X$ be a sofic shift. For every right-ray $x^+ = x_1 x_2 x_3 \ldots \in X^+$ there exists $n \in \N$ such that $P_\infty(x^+) = P_\infty(x_1 x_2 \ldots x_k)$ for all $k \geq n$. 
\end{lem}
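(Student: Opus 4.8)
The plan is to exploit the finiteness of the left Krieger cover together with the nested structure of the predecessor sets of prefixes of $x^+$. First I would observe that for any word $w$ and any letter $a$ with $aw \in \BB(X)$ we have $P_\infty(aw) \supseteq$ nothing directly, but rather the key monotonicity goes the other way along prefixes: if $k \leq \ell$ then every left-ray $y^-$ that may precede $x_1 \cdots x_\ell$ also may precede $x_1 \cdots x_k$, so $P_\infty(x_1 \cdots x_\ell) \subseteq P_\infty(x_1 \cdots x_k)$. Thus the sequence $\bigl(P_\infty(x_1 \cdots x_k)\bigr)_{k \in \N}$ is a decreasing sequence of subsets of $X^-$. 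Moreover $P_\infty(x^+) = \bigcap_{k \in \N} P_\infty(x_1 \cdots x_k)$: a left-ray $y^-$ lies in $P_\infty(x^+)$ iff $y^- x^+ \in X$, and since $X$ is closed (and a bi-infinite sequence lies in a shift space iff all its finite subwords do) this holds iff $y^- x_1 \cdots x_k \in \BB(X)$ for every $k$, i.e. iff $y^- \in P_\infty(x_1 \cdots x_k)$ for all $k$.

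Next I would invoke the fact, recalled in the background section (citing \cite{krieger_sofic_I}), that a shift space is sofic iff its collection of predecessor sets is finite. Since $X$ is sofic, the set $\{ P_\infty(w) \mid w \in \BB(X) \}$ is finite. Hence the decreasing sequence $P_\infty(x_1) \supseteq P_\infty(x_1 x_2) \supseteq \cdots$ takes only finitely many distinct values, so it must stabilize: there is $n \in \N$ with $P_\infty(x_1 \cdots x_k) = P_\infty(x_1 \cdots x_n)$ for all $k \geq n$. Taking the intersection over all $k$ then gives $P_\infty(x^+) = P_\infty(x_1 \cdots x_n)$, and therefore $P_\infty(x^+) = P_\infty(x_1 \cdots x_k)$ for all $k \geq n$, as desired.

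The only genuinely delicate point is the identity $P_\infty(x^+) = \bigcap_k P_\infty(x_1 \cdots x_k)$, and specifically the direction $\supseteq$: one must argue that if $y^-$ can precede every finite prefix of $x^+$, then $y^- x^+$ is actually an admissible bi-infinite sequence in $X$. This is a compactness argument — for each $k$ pick $z^{(k)} \in X$ with $(z^{(k)})^- = y^-$ and $z^{(k)}_0 \cdots z^{(k)}_{k-1} = x_1 \cdots x_k$, then extract a convergent subsequence using compactness of $\AA^\Z$ and closedness of $X$ — or, more cleanly, just note that membership of a bi-infinite sequence in a shift space is a purely local condition on its finite subwords (cf. \cite[Proposition 1.3.4]{lind_marcus}), and every finite subword of $y^- x^+$ is a subword of some $y_{-m} \cdots y_{-1} x_1 \cdots x_k$, which lies in $\BB(X)$ because $y^- \in P_\infty(x_1 \cdots x_k)$. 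Once this identity is in hand the rest is immediate from finiteness of the Krieger cover.
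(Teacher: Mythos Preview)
Your proof is correct and follows essentially the same approach as the paper: establish the decreasing chain $P_\infty(x_1) \supseteq P_\infty(x_1 x_2) \supseteq \cdots$, use soficity to get stabilization at some $n$, and then argue via the local (forbidden-word) characterization of membership in $X$ that the stable value equals $P_\infty(x^+)$. The paper states this last step in one line (``$y^- x^+$ contains no forbidden words'') rather than framing it as $P_\infty(x^+) = \bigcap_k P_\infty(x_1 \cdots x_k)$, but the content is identical.
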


\begin{proof} 
It is clear that $P_\infty(x_1) \supseteq P_\infty(x_1 x_2) \supseteq
\cdots \supseteq P_\infty(x^+)$. Since $X$ is
sofic, there are only finitely many different predecessor sets of
words, so there must exist $n \in \N$ such that $P_\infty(x_1 x_2 \ldots x_k)
= P_\infty(x_1 x_2 \ldots x_n)$ for all $k \geq n$. If $y^- \in
P_\infty(x_1 x_2 \ldots x_n)$ is given, then $y^- x_1 x_2 \ldots x_k \in X$ for
all $k \geq n$, so $y^- x^+$ contains no forbidden words, and
therefore $y^- \in P_\infty(x^+)$. Since $y^-$ was arbitrary,
$P_\infty(x^+) =  P_\infty(x_1 x_2 \ldots x_n)$.
\end{proof}

\begin{thm}
\label{thm_foundations_psc}
For a sofic shift $X$, the generalized left Fischer cover $(G, \LL_G)$ and the left Krieger cover $(K, \LL_K)$ are both foundations of the past set cover $(\psc, \LL_\psc)$.
\end{thm}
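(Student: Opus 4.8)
The plan is, for each of $(G,\LL_G)$ and $(K,\LL_K)$, to exhibit an explicit past closed pair whose associated labelled graph is isomorphic to $(\psc,\LL_\psc)$, the isomorphism in both cases being the map ``take the union of a set of predecessor sets.'' I would treat $(K,\LL_K)$ first, as it is the cleaner case, and then explain the single extra ingredient needed to push the argument through for the subgraph $(G,\LL_G)$.

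For $(K,\LL_K)$, put
\[
  S_K=\{\, V\subseteq K^0 \;:\; \exists\,w\in\BB(X)\ \text{with}\ \textstyle\bigcup_{P\in V}P=P_\infty(w)\,\},
\]
with the restriction of $\sim_\cup$. That $\{P\}\in S_K$ for every $P\in K^0$ is exactly Lemma~\ref{lem_prefix_with_same_past}, which gives $P=P_\infty(x^+)=P_\infty(x_1\cdots x_n)$ for a suitable $n$; and $\{u\}\sim_\cup\{v\}$ forces $u=v$ trivially. The remaining past-closedness conditions all reduce to one computation: for $V\in S_K$ with $\bigcup_{P\in V}P=P_\infty(w)$ and $a\in\AA$,
\[
  \textstyle\bigcup_{P'\in aV}P' \;=\; \{\,y^-\in X^- \mid y^-a\in P_\infty(w)\,\} \;=\; P_\infty(aw),
\]
which uses that an $a$-edge $P'\to P$ of $K$ witnesses $P'=P_\infty(ax^+)$, $P=P_\infty(x^+)$ for some $x^+\in X^+$, together with the elementary fact that $P_\infty(ax^+)$, and more generally $P_\infty(aw)$, depends on $x^+$ (respectively on $w$) only through $P_\infty(x^+)$ (respectively $P_\infty(w)$). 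Hence $aV\neq\emptyset$ forces $aw\in\BB(X)$ and $aV\in S_K$, and $V\sim_\cup V'$ forces $aV\sim_\cup aV'$. The map $\varphi\colon S_K/{\sim_\cup}\to \psc^0$, $\varphi(\class V)=\bigcup_{P\in V}P$, is well defined and injective by the definition of $\sim_\cup$, and surjective because $P_\infty(w)=\bigcup_{x^+}P_\infty(wx^+)$ with each $P_\infty(wx^+)\in K^0$ and $P_\infty(wx^+)\subseteq P_\infty(w)$, so $V=\{P\in K^0\mid P\subseteq P_\infty(w)\}\in S_K$ maps onto $P_\infty(w)$. Since a vertex $\class V$ of $\GG((K,\LL_K),S_K,{\sim_\cup})$ receives $a$ precisely from $\class{aV}$, the displayed identity shows $\varphi$ carries the edge relation of $\GG((K,\LL_K),S_K,{\sim_\cup})$ to that of $\psc$; thus $\GG((K,\LL_K),S_K,{\sim_\cup})\cong(\psc,\LL_\psc)$ and $(K,\LL_K)$ is a foundation of $(\psc,\LL_\psc)$.

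For $(G,\LL_G)$ I would run the same argument with $S_G=\{\,V\subseteq G^0 \mid \exists\,w\in\BB(X),\ \bigcup_{P\in V}P=P_\infty(w)\,\}$ and $\sim_\cup$, noting that $(G,\LL_G)$ is a finite (subgraph of $K$), left-resolving, predecessor-separated presentation of $X$ by Proposition~\ref{prop_justifying}, so $\GG((G,\LL_G),S_G,{\sim_\cup})$ makes sense once past-closedness is checked. Surjectivity of the union map onto $\psc^0$ still goes through because every vertex of $K$ decomposes as a finite union of (non-decomposable, hence) vertices of $G$, so again $V=\{Q\in G^0\mid Q\subseteq P_\infty(w)\}$ works; the singleton condition and injectivity are as before. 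The one genuinely new point — which I expect to be the main obstacle — is that $aV$ is now formed inside the \emph{subgraph} $G$, so a priori it could be strictly smaller than the set of $a$-sources computed in $K$, and one must verify that $\bigcup_{P'\in aV}P'$ does not shrink below $P_\infty(aw)$. One inclusion is immediate from the $K$-computation; for the reverse, take $y^-\in P_\infty(aw)$, so $y^-a\in P_\infty(w)=\bigcup_{P\in V}P$, whence $y^-a\in v$ for some $v\in V\subseteq G^0$; writing $v=P_\infty(x^+)$ gives $y^-ax^+\in X$ and so a $K$-edge labelled $a$ from $P_\infty(ax^+)$ to $v$ with $y^-\in P_\infty(ax^+)$. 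The crucial observation is that $G^0$ is closed under predecessors in $K$: if $u\geq v$ in $K$ and $v\in G^0$, then $v$ reaches a non-decomposable vertex, hence so does $u$, so $u\in G^0$. Applying this to $P_\infty(ax^+)\geq v\in G^0$ shows $P_\infty(ax^+)\in G^0$, so that $K$-edge is in fact a $G$-edge, $P_\infty(ax^+)\in aV$, and $y^-\in\bigcup_{P'\in aV}P'$. With the identity $\bigcup_{P'\in aV}P'=P_\infty(aw)$ in hand, past-closedness of $(S_G,{\sim_\cup})$ and the fact that the union map intertwines edges follow exactly as in the $K$ case — absorbing throughout the non-uniqueness of $w$ via the dependence of $P_\infty(aw)$ on $P_\infty(w)$ alone — giving $\GG((G,\LL_G),S_G,{\sim_\cup})\cong(\psc,\LL_\psc)$, so $(G,\LL_G)$ is also a foundation of $(\psc,\LL_\psc)$.
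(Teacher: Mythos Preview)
Your proof is correct and follows essentially the same route as the paper: define $S_G$ and $S_K$ exactly as the paper's $S$ and $T$, use Lemma~\ref{lem_prefix_with_same_past} for the singleton condition, and verify past-closedness via the identity $\bigcup_{P'\in aV}P'=P_\infty(aw)$ so that the union map gives an isomorphism to $(\psc,\LL_\psc)$. The only differences are cosmetic (you treat $K$ before $G$) and expository: where the paper simply invokes ``arguments analogous to the ones used in the proof of Theorem~\ref{thm_gfc_foundation_lkc}'', you spell out the details, including the predecessor-closure of $G^0$ in $K$ that makes the $aV$-computation in the subgraph $G$ agree with the one in $K$ --- a point the paper leaves implicit both here and in that earlier proof.
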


\begin{proof}
Define $S = \{ V \subseteq G^0 \mid \exists w \in \BB(X) \textrm{ such that } P_\infty(w) = \bigcup_{P \in V} P \}$,
and use Lemma \ref{lem_prefix_with_same_past} to conclude that $S$ contains $\{ P \}$ for every $P \in G^0$. By arguments analogous to the ones used in the proof of Theorem \ref{thm_gfc_foundation_lkc}, it follows that $\GG((G,\LL_G),S, {\sim_\cup})$ is isomorphic to $(\psc, \LL_\psc)$. 
To see that $(K, \LL_K)$ is also a foundation, define 
$T = \{ V \subseteq K^0 \mid \exists w \in \BB(X) \textrm{ such that } P_\infty(w) = \bigcup_{P \in V} P \}$,
 and apply arguments analogous to the ones used above to prove that $(\psc, \LL_\psc)$ is isomorphic to $\GG((K,\LL_K),T, {\sim_\cup})$.
\end{proof}

In the following, the $n$th layer of the left Krieger cover (past set cover) will always refer to the $n$th layer with respect to the generalized left Fischer cover $(G, \LL_G)$. For a right-ray (word) $x$, $P_\infty(x)$ is a vertex in the $n$th layer of the left Kriger cover (predecessor set cover) for some $n \in \N$, and such an $x$ is  said to be $\frac{1}{n}$-\emph{synchronizing}. Note that $x$ is $\frac{1}{n}$-synchronizing if and only if $n$ is the smallest number such that there exist $P_1, \ldots, P_n \in G^0$ with $\bigcup_{i=1}^n P_i = P_\infty(x)$.
In an irreducible sofic shift with left Fischer cover $(F, \LL_F)$, this happens if and only if $n$ is the smallest number such that there exist $u_1, \ldots, u_n \in F^0$ with $\bigcup_{i=1}^n P_\infty(u_i) = P_\infty(x)$.

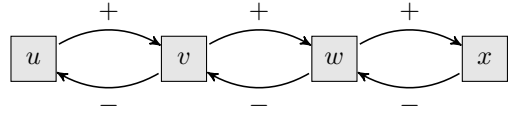
\begin{figure}
\begin{center}
\begin{tikzpicture}
  [
   knude/.style = {circle, inner sep = 0pt},
   vertex/.style = {circle, draw, minimum size = 1 mm, inner sep =
      0pt, fill=black!10},
   textVertex/.style = {rectangle, draw, minimum size = 6 mm, inner sep =
      1pt, fill=black!10},
   to/.style = {->, shorten <= 1 pt, >=stealth', semithick}]
  

    \node[textVertex] (u) at (-3,4) {$u$};  
    \node[textVertex] (v) at (-1,4) {$v$};  
    \node[textVertex] (w) at (1,4) {$w$};  
    \node[textVertex] (x) at (3,4) {$x$}; 

  \draw[to, bend left=30] (u) to node[auto] {$+$} (v);
  \draw[to, bend left=30] (v) to node[auto] {$+$} (w);
  \draw[to, bend left=30] (w) to node[auto] {$+$} (x);
  \draw[to, bend left=30] (x) to node[auto] {$-$} (w);
  \draw[to, bend left=30] (w) to node[auto] {$-$} (v);
  \draw[to, bend left=30] (v) to node[auto] {$-$} (u);

\end{tikzpicture}
\end{center}
\caption{Left Fischer cover of the 3-charge
  constrained shift.} 
\label{F_LFC_3cc}
\end{figure}

\begin{figure}
\begin{center}
\begin{tikzpicture}
  [
   knude/.style = {circle, inner sep = 0pt},
   vertex/.style = {circle, draw, minimum size = 1 mm, inner sep =
      0pt, fill=black!10},
   textVertex/.style = {rectangle, draw, minimum size = 6 mm, inner sep =
      1pt, fill=black!10},
   to/.style = {->, shorten <= 1 pt, >=stealth', semithick}]
  
    \node[textVertex] (Pu) at (-4.5,1) {$P_\infty(u)$};
    \node[textVertex] (Pv) at (-1.5,1) {$P_\infty(v)$};
    \node[textVertex] (Pw) at ( 1.5,1) {$P_\infty(w)$}; 
    \node[textVertex] (Px) at ( 4.5,1) {$P_\infty(x)$}; 

    \node[textVertex] (Puv) at (-4,-1) {$P_\infty(u) \cup P_\infty(v)$};
    \node[textVertex] (Pvw) at ( 0,-1) {$P_\infty(v) \cup P_\infty(w)$};
    \node[textVertex] (Pwx) at ( 4,-1) {$P_\infty(w) \cup P_\infty(x)$};

    \node[textVertex] (Puvw) at (-3,-3) {$P_\infty(u) \cup
      P_\infty(v) \cup P_\infty(w)$};
    \node[textVertex] (Pvwx) at (3,-3) {$P_\infty(v) \cup
      P_\infty(w) \cup P_\infty(x)$};

  \draw[to, bend left=20] (Pu) to node[auto] {$+$} (Pv);
  \draw[to, bend left=20] (Pv) to node[auto] {$+$} (Pw);
  \draw[to, bend left=20] (Pw) to node[auto] {$+$} (Px);
  \draw[to, bend left=20] (Px) to node[auto] {$-$} (Pw);
  \draw[to, bend left=20] (Pw) to node[auto] {$-$} (Pv);
  \draw[to, bend left=20] (Pv) to node[auto] {$-$} (Pu);
 
  \draw[to] (Pu) to node[auto,swap] {$+$} (Puv);
  \draw[to] (Px) to node[auto] {$-$} (Pwx);

  \draw[to, bend left=10] (Puv) to node[auto] {$+$} (Pvw);
  \draw[to, bend left=10] (Pvw) to node[auto] {$+$} (Pwx);
  \draw[to, bend left=10] (Pwx) to node[auto] {$-$} (Pvw);
  \draw[to, bend left=10] (Pvw) to node[auto] {$-$} (Puv);

  \draw[to] (Puv) to node[auto,swap] {$+$} (Puvw);
  \draw[to] (Pwx) to node[auto] {$-$} (Pvwx);

  \draw[to, bend left=5] (Puvw) to node[auto] {$+$} (Pvwx);
  \draw[to, bend left=5] (Pvwx) to node[auto] {$-$} (Puvw);
\end{tikzpicture}
\end{center}
\caption{Left Krieger cover of the 3-charge
  constrained shift.} 
\label{F_LKC_3cc}
\end{figure}
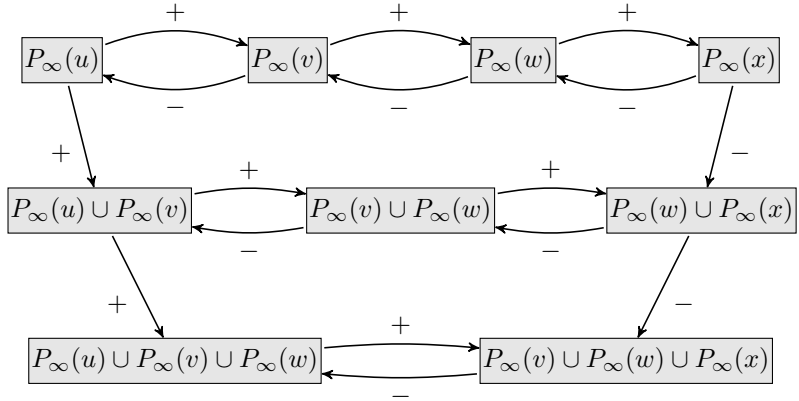

\begin{example}
Figures \ref{F_LFC_3cc} and \ref{F_LKC_3cc} show, respectively, the left Fischer and the left Krieger cover of the 3-charge constrained 
shift (see e.g. \cite[1.2.7]{lind_marcus} for the definition of charge
constrained shifts).
There are 3 vertices in the second layer of the left Krieger cover and
two in the third.
Note how the left Fischer cover can be identified with the first layer
of the left Krieger cover.
Note also that the second layer is the left Fischer cover
of the 2-charge constrained shift and that the third layer is the left
Fischer cover of the 1-charge constrained shift.
\end{example}

\begin{cor}
\label{cor_PSC_reducible_if_LKC_reducible}
If the left Krieger cover of a sofic shift is reducible then so is the past set cover.
\end{cor}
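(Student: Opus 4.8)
The plan is to lean on Theorem~\ref{thm_foundations_psc}, which tells us that the left Krieger cover $(K,\LL_K)$ is a foundation of the past set cover $(\psc,\LL_\psc)$; that is, $(\psc,\LL_\psc)$ is labelled graph isomorphic to $\GG((K,\LL_K),T,{\sim_\cup})$ for a suitable past closed pair $(T,{\sim_\cup})$. By the proposition immediately following the definition of a foundation, the first layer of $\GG((K,\LL_K),T,{\sim_\cup})$ (with respect to $(K,\LL_K)$) is labelled graph isomorphic to $(K,\LL_K)$ itself. So I would begin by fixing this isomorphism and regarding $(K,\LL_K)$ as the subgraph of $(\psc,\LL_\psc)$ induced by the first layer.

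Next I would invoke Proposition~\ref{prop_layers}: an edge of $(\psc,\LL_\psc)$ runs from a vertex in the $m$th layer to a vertex in the $n$th layer only when $m\le n$. Since the first layer carries the minimal index, no edge terminates in a first-layer vertex unless it also originates there. Consequently, if $\lambda$ is any path in $(\psc,\LL_\psc)$ whose terminal vertex lies in the first layer, then reading $\lambda$ backwards and applying the layer inequality edge by edge shows that every vertex of $\lambda$ lies in the first layer. Hence $\lambda$ is a path in the induced first-layer subgraph, i.e.\ a path in $(K,\LL_K)$.

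Finally, suppose $(K,\LL_K)$ is reducible. By the definition of irreducibility there are vertices $u,v\in K^0$ such that there is no path from $u$ to $v$ in $(K,\LL_K)$. Viewing $u$ and $v$ as vertices of $(\psc,\LL_\psc)$ in the first layer, any path from $u$ to $v$ in $(\psc,\LL_\psc)$ has terminal vertex $v$ in the first layer, so by the previous paragraph it is a path from $u$ to $v$ inside $(K,\LL_K)$ --- a contradiction. Thus $u$ is not connected to $v$ in $(\psc,\LL_\psc)$, and therefore $(\psc,\LL_\psc)$ is reducible.

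I do not expect a genuine obstacle here; the proof is essentially a one-line consequence of the foundation structure. The only points needing a little care are that the relevant layering must be taken with respect to $(K,\LL_K)$ rather than the default $(G,\LL_G)$ --- which is exactly what Theorem~\ref{thm_foundations_psc} supplies --- and the observation that, because layer~$1$ is minimal, a path can never return to it once it has left, which is immediate from Proposition~\ref{prop_layers}.
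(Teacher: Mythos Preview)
Your proof is correct and follows essentially the same route as the paper, which simply cites Proposition~\ref{prop_layers} and Theorem~\ref{thm_foundations_psc}; you have just spelled out the details. Your care in taking the layering with respect to $(K,\LL_K)$ rather than $(G,\LL_G)$ is exactly right and is the content of the second half of Theorem~\ref{thm_foundations_psc}.
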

\nopagebreak
\begin{proof}
This follows from Proposition \ref{prop_layers} and Theorem  \ref{thm_foundations_psc}. 
\end{proof}

\section{The range of a flow invariant}
\label{sec_invariant}
Let $E$ be a directed graph.
Vertices $u,v \in E^0$ \emph{properly communicate} \cite{bates_eilers_pask}
if there are paths
$\mu, \lambda \in E^*$ of length greater than or equal to 1 such that
$s(\mu) = u$, $r(\mu) = v$, $s(\lambda) = v$, and $r(\lambda) =
u$. 
This relation is used to construct maximal disjoint subsets of $E^0$, called \emph{proper communication sets of vertices}, such
that $u,v \in E^0$ properly communicate if and only if they belong to
the same subset. 
The \emph{proper communication graph} $PC(E)$ is defined to be
the directed graph for which the vertices are the proper
communication sets of vertices of $E$ and for which there is an edge
from one proper communication set to another if and only if there is
a path from a vertex in the first set to a vertex in the second. 
The proper communication graph of the left Krieger cover of a sofic shift space is a flow-invariant \cite{bates_eilers_pask}.

Let $X$ be an irreducible sofic shift with left Fischer cover $(F, \LL_F)$ and left Krieger cover $(K, \LL_K)$, and let $E$ be the proper communication graph of $K$. By construction, $E$ is finite and contains no circuit. The left Fischer cover  is isomorphic to an irreducible subgraph of $(K,\LL_K)$ corresponding to a root $r \in E^0$ \cite[Lemma 2.7]{krieger_sofic_I}, and by definition, there is an edge from $u \in E^0$ to $v \in E^0$ whenever $u > v$. 
The following proposition gives the range of the flow-invariant 
by proving that all such graphs can occur. 

\begin{prop}
\label{prop_range_invariant}
Let $E$ be a finite directed graph with a root and without circuits. $E$ is the proper communication graph of the left
Krieger cover of an AFT shift if there is an edge from $u \in E^0$ to $v \in E^0$ whenever $u > v$.
\end{prop}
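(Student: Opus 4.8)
The plan is to rephrase the hypothesis order-theoretically and then to exhibit, for each admissible $E$, an explicit AFT shift realising it. Since $E$ is finite, rooted and circuit-free, and since every path $u\ge v$ is accompanied by an edge $u\to v$, the relation ``there is a path from $u$ to $v$'' makes $E^0$ into a finite poset $(P,\ge)$ with a greatest element $r$, and $E$ is precisely the strict-order graph of $P$. What has to be produced is an AFT shift $X$ whose left Krieger cover $(K,\LL_K)$ has proper communication graph isomorphic to $E$. By \cite[Lemma 2.7]{krieger_sofic_I} the left Fischer cover of an irreducible $X$ sits inside $(K,\LL_K)$ as its top irreducible component, so its vertex set is automatically a single proper communication class and is the root of $PC(K)$; the task is therefore to arrange the rest of $(K,\LL_K)$ so that, after collapsing proper communication classes, one recovers $P\setminus\{r\}$ sitting below $r$ in the prescribed way.

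First I would build the intended left Fischer cover $(G,\LL_G)$ by generalising the charge constrained shift, whose left Fischer cover (Figure \ref{F_LFC_3cc}) realises a chain. The natural model is to take as vertex set the lattice $J(P)$ of order ideals of $P$, with ``creation'' edges $I\to I\cup\{p\}$ and ``annihilation'' edges $I\to I\setminus\{p\}$ running along the covering relations of $J(P)$, and then — this is the delicate point — to choose the labelling so that (a)~$G$ is irreducible, (b)~$G$ is left-resolving, (c)~$G$ is right-closing, and (d)~sufficiently many labels are identified that for each $p\in P$ there is a right-ray $x_p^{+}\in X^{+}$ which has presentations starting from \emph{exactly} the band of lattice vertices attached to $p$; identifications of this sort are what lets the single symbol ``$+$'' make $(+-)^{\infty}$ slide across three Fischer-cover vertices at once. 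A variant that may be easier to control is an induction on $|P|$: delete a minimal element $q$, realise $P\setminus\{q\}$ by induction, and re-insert $q$ as a new deeper proper communication class by blowing up a suitable symbol into a charge-type gadget hung below the classes $\{p:p>q\}$. In either case, once $G$ is irreducible, left-resolving and predecessor-separated it is the left Fischer cover of $X:=\X_{(G,\LL_G)}$, and since $G$ is right-closing (right-resolving, in the model cases) $X$ is AFT.

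Next I would pin down $(K,\LL_K)$ by means of the foundation machinery of Section \ref{sec_foundation}. By Theorem \ref{thm_gfc_foundation_lkc} the vertices of $(K,\LL_K)$ are the ${\sim_\cup}$-classes of those $V\subseteq G^0$ with $\bigcup_{u\in V}P_\infty(u)=P_\infty(x^{+})$ for some $x^{+}\in X^{+}$, and Proposition \ref{prop_layers} arranges them into layers with edges only going downward. The core of the verification is then a bookkeeping argument showing that each $x_p^{+}$ contributes a vertex lying in the layer equal to $1$ plus the length of the longest chain of $P$ strictly above $p$; that the vertices so obtained, together with their translates under reading letters off the front, form one irreducible subgraph $C_p$ of $(K,\LL_K)$ for each $p\in P$, with $C_r$ equal to the left Fischer cover; and that there is a path in $(K,\LL_K)$ from $C_u$ to $C_v$ if and only if $u>v$ in $P$. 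Collapsing each $C_p$ to a point then gives $PC(K)\cong E$, as required.

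The main obstacle is the construction of the second step together with the non-degeneracy part of the third: one must design the labelling so that the bands for distinct $p$ are genuinely distinct and irredundant \emph{as unions} $\bigcup P_\infty(u)$ — so that intended-distinct proper communication classes do not accidentally coincide and no spurious path appears between them — while simultaneously keeping $G$ left-resolving, right-closing and predecessor-separated. Predecessor-separatedness of $G$ and the pairwise incomparability of the predecessor sets $P_\infty(u)$ inside each band are exactly the inputs that make this go through; checking them for the chosen model is routine but is where the real content sits. Verifying that the graph $G$ as written down really is predecessor-separated (hence genuinely the left Fischer cover, so that the description of $(K,\LL_K)$ via Theorem \ref{thm_gfc_foundation_lkc} applies) is a secondary, purely computational, point.
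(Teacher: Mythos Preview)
Your proposal is an outline rather than a proof: you describe the \emph{properties} a construction should have but never actually exhibit one. You offer two candidate schemes — order ideals $J(P)$ with creation/annihilation edges, or an induction on $|P|$ — but commit to neither, and you yourself flag ``the main obstacle'' as unresolved. The heart of the proposition is precisely the construction you defer; without it, nothing has been proved. In particular, for the $J(P)$ model you do not say what the labelling is, what the ``band attached to $p$'' means, or why the unions $\bigcup P_\infty(u)$ over distinct bands are genuinely distinct, so the claim that $PC(K)\cong E$ has no support.

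The paper's construction is far more elementary than what you sketch, and it bypasses all of your obstacles. For each $v\in E^0$ it takes $n(v)=2^{l(v)}$ vertices $v_1,\ldots,v_{n(v)}$, where $l(v)$ is the length of the longest path from the root to $v$; at each $v_i$ it puts a self-loop labelled $a_v$; whenever $E$ has an edge $u\to v$ it draws $n(v)/n(u)\ge 2$ edges out of each $u_i$, labelled $a_{u,v}^1,\ldots,a_{u,v}^{n(v)/n(u)}$, so that each $v_j$ receives exactly one; finally, from each $v_i$ with $v$ a sink it draws a uniquely labelled edge back to $r_1$. This graph is visibly irreducible, left-resolving, right-resolving (hence AFT), and predecessor-separated (there is a uniquely labelled path from $r_1$ to every vertex). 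The payoff is that the Krieger cover is then essentially trivial to compute: any right-ray containing one of the return-to-$r_1$ labels is intrinsically synchronizing, so the only non-synchronizing right-rays are of the form $wa_v^\infty$, and $P_\infty(a_v^\infty)=\bigcup_{i=1}^{n(v)}P_\infty(v_i)$ is a vertex in layer $n(v)$ with a loop labelled $a_v$. One reads off directly that there is an edge from $P_\infty(a_u^\infty)$ to $P_\infty(a_v^\infty)$ if and only if $E$ has an edge $u\to v$. No lattice of order ideals, no induction, no delicate balancing of labellings is needed.
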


\begin{proof} 
Let $E$ be an arbitrary finite directed graph which contains no circuit and which has a root $r$, and let $\tilde E$ be the directed graph obtained from $E$ by adding an edge from $u \in E^0$ to $v \in E^0$ whenever $u > v$. 
The goal is to construct a labelled graph $(F, \LL_F)$ which is the
left Fischer cover of an irreducible sofic shift with the desired
properties. 
For each $v \in E^0$, let $l(v)$ be the length of the
longest path from $r$ to $v$. This is well-defined since $E$
does not contain any circuits.
For each $v \in E^0$, define $n(v) = 2^{l(v)}$ vertices $v_1, \ldots ,
v_{n(v)}  \in F^0$. The single vertex corresponding to the root $r
\in E^0$ is denoted $r_1$.
For each $v \in E^0$, draw a loop of length 1 labelled $a_v$ at each
of the vertices $v_1, \ldots , v_{n(v)}  \in F^0$. 
If there is an edge from $u \in E^0$ to $v \in E^0$ then $l(v) >
l(u)$. From each vertex $u_1, \ldots , u_{n(u)}$ draw
$n(u,v) = \frac{n(v)}{n(u)} = 2^{l(v)-l(u)} \geq 2$ edges labelled $a_{u,v}^1,
\ldots , a_{u,v}^{n(u,v)}$ such that every vertex $v_1,
\ldots , v_{n(v)}$ receives exactly one of these edges. 
For each sink $v \in E^0$ draw a uniquely labelled edge from each
vertex $v_1, \ldots , v_{n(v)}$ to $r_1$. 
This finishes the construction of $(F, \LL_F)$.

By construction, $F$ is irreducible, right-resolving, and
left-resolving. Additionally, it is 
predecessor-separated because there is a uniquely labelled path to every
vertex in $F^0$ from $r_1$.
Thus,  $(F, \LL_F)$ is the left Fischer
cover of an AFT shift $X$. Let $(K, \LL_K)$ be the
left Krieger cover of $X$. 

For every $v \in E^0$, $P_\infty(a_v^\infty) = \bigcup_{i=1}^{n(v)}
P_\infty(v_i)$ and no smaller set of vertices has this property, so
$P_\infty(a_v^\infty)$ is a vertex in the $n(v)$th layer of the left
Krieger cover. There is clearly a loop labelled $a_v$ at the
vertex $P_\infty(a_v^\infty)$, so it belongs to a proper
communication set of vertices. 
Furthermore, $b a_v^\infty \in X^+$ if and only if $b =
a_v$ or $b = a_{u,v}^i$ for some $u \in E^0$ and $1 \leq i \leq
n(u,v)$. By construction,   
$P_\infty(a_{u,v}^i a_v^\infty) = \bigcup_{i=1}^{n(u)} P_\infty(u_i) =
P_\infty(a_u^\infty)$, so there is an edge from
$P_\infty(a_u^\infty)$ to $P_\infty(a_v^\infty)$ if and only if there
is an edge from $u$ to $v$ in $E$. This proves that $E$, and hence also $\tilde E$, are a subgraphs
of the proper communication graph of $K$.

\begin{figure}
\begin{center}
\begin{tikzpicture}
  [bend angle=45,
   knude/.style = {circle, inner sep = 0pt},
   vertex/.style = {circle, draw, minimum size = 1 mm, inner sep =
      0pt, fill=black!10},
   textVertex/.style = {rectangle, draw, minimum size = 6 mm, inner sep =
      1pt, fill=black!10},
   to/.style = {->, shorten <= 1 pt, >=stealth', semithick}]
  
  \node[textVertex] (r) at (0,0) {$r$}; 
  \node[textVertex] (x) at (-1.5,-1.5) {$x$}; 
  \node[textVertex] (y) at (0,-1.5) {$y$}; 
  \node[textVertex] (z) at (1.5,-1.5) {$z$}; 

  \draw[to] (r) to (x);
  \draw[to] (r) to (y);
  \draw[to] (r) to (z);
  \draw[to] (y) to (z);

\end{tikzpicture}
\end{center}
\caption{A directed graph with root $r$ and without circuits.}
\label{fig_ex_construction_E}
\end{figure}
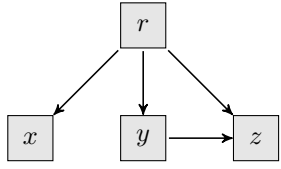  
\begin{figure}
\begin{center}
\begin{tikzpicture}
  [bend angle=5,
   knude/.style = {circle, inner sep = 0pt},
   vertex/.style = {circle, draw, minimum size = 1 mm, inner sep =
      0pt, fill=black!10},
   textVertex/.style = {rectangle, draw, minimum size = 6 mm, inner sep =
      1pt, fill=black!10},
   to/.style = {->, shorten <= 1 pt, >=stealth', semithick}]
  
  \node[textVertex] (v0) at (0,0) {$r_1$}; 

  \node[textVertex] (vx1) at (-7, -2) {$x_1$}; 
  \node[textVertex] (vx2) at (-6, -2) {$x_2$}; 
  \node[textVertex] (vy1) at (-2.6, -2) {$y_1$}; 
  \node[textVertex] (vy2) at (2.6, -2) {$y_2$}; 

  \node[textVertex] (vz1) at (-3.9, -4) {$z_1$}; 
  \node[textVertex] (vz2) at (-1.3, -4) {$z_2$}; 
  \node[textVertex] (vz3) at (1.3, -4) {$z_3$}; 
  \node[textVertex] (vz4) at (3.9, -4) {$z_4$}; 

  \node[textVertex] (v0') at (0,-7) {$r_1$}; 

  \draw[to, bend right = 20] (v0) to node[near end, above] {$a_{r,x}^1$} (vx1);
  \draw[to, bend right = 18] (v0) to node[near end, below] {$a_{r,x}^2$} (vx2);
  \draw[to, bend right] (v0) to node[near end, right] {$a_{r,y}^1$} (vy1);
  \draw[to, bend left] (v0) to node[near end, left] {$a_{r,y}^2$} (vy2);

  \draw[to, bend right = 40] (v0) to node[near end, left] {$a_{r,z}^1$} (vz1);
  \draw[to, bend right] (v0) to node[near end, right] {$a_{r,z}^2$} (vz2);
  \draw[to, bend left]  (v0) to node[near end, left] {$a_{r,z}^3$} (vz3);
  \draw[to, bend left= 40] (v0) to node[near end, right]  {$a_{r,z}^4$} (vz4);

  \draw[to, bend right] (vy1) to node[very near end, right] {$a_{y,z}^1$} (vz1);
  \draw[to, bend left]  (vy1) to node[very near end, left] {$a_{y,z}^2$} (vz2);
  \draw[to, bend right] (vy2) to node[very near end, right] {$a_{y,z}^1$} (vz3);
  \draw[to, bend left]  (vy2) to node[very near end, left]  {$a_{y,z}^2$} (vz4);

  \draw[to, loop below] (vx1) to node[auto] {$a_x$} (vx1);
  \draw[to, loop below] (vx2) to node[auto] {$a_x$} (vx2) ;
  \draw[to, loop right] (vy1) to node[auto] {$a_y$} (vy1);
  \draw[to, loop left] (vy2) to node[auto] {$a_y$} (vy2);
  \draw[to, loop below] (vz1) to node[auto] {$a_z$} (vz1);
  \draw[to, loop below] (vz2) to node[auto] {$a_z$} (vz2) ;
  \draw[to, loop below] (vz3) to node[auto] {$a_z$} (vz3);
  \draw[to, loop below] (vz4) to node[auto] {$a_z$} (vz4);

  \draw[to, bend right = 35] (vx1) to (v0');
  \draw[to, bend right = 30] (vx2) to (v0');
  \draw[to, bend right = 30] (vy1) to (v0');
  \draw[to, bend left = 30] (vy2) to (v0');
  \draw[to, bend right = 10] (vz1) to (v0');
  \draw[to, bend left = 10] (vz2) to (v0');
  \draw[to, bend right = 10] (vz3) to (v0');
  \draw[to, bend left = 10] (vz4) to (v0');
\end{tikzpicture}
\end{center}
\caption{Left Fischer cover of the sofic shift $X$ con\-sidered in
  Example \ref{ex_construction}.
 }
\label{fig_ex_construction_F}
\end{figure}
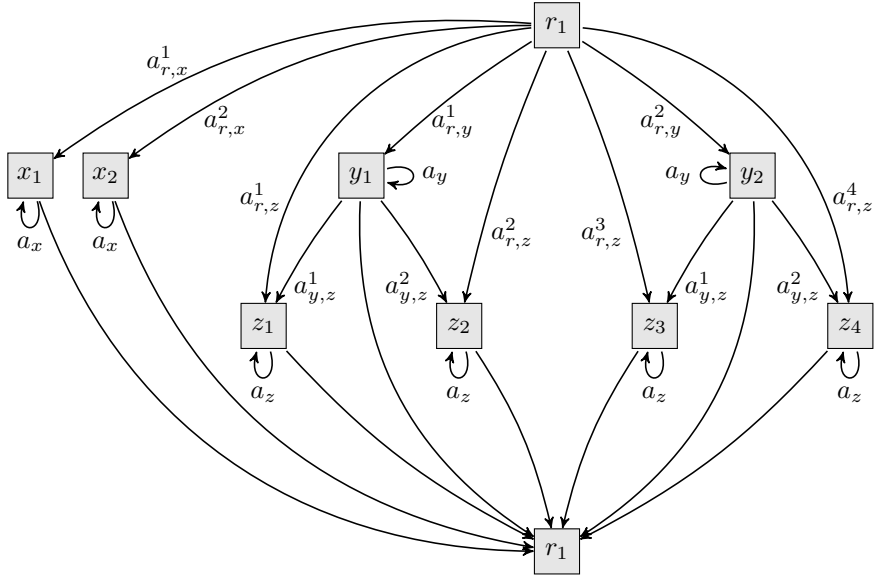 

Since the edges which terminate at $r_1$ are uniquely labelled, any
$x^+ \in X^+$ which contains one of these letters must be intrinsically
synchronizing. If $x^+ \in X^+$ does not contain any
of these letters then $x^+$ must be eventually periodic with $x^+ = w
a_v^\infty$ for some $v \in E^0$ and $w \in \BB(X)$. Thus, $K$ only
has the vertices described above, and therefore the proper
communication graph of $K$ is $\tilde E$. 
\end{proof}

\begin{example}
\label{ex_construction}
To illustrate the construction used in
the proof of Proposition \ref{prop_range_invariant}, let
$E$ be the directed graph drawn in Figure
\ref{fig_ex_construction_E}. $E$ has a unique
maximal vertex $r$ and contains no circuit, so 
it is the proper communication graph of the left Krieger cover of an
irreducible sofic shift.
Note that $l(x) = l(y) = 1$ and that $l(z) = 2$. Figure
\ref{fig_ex_construction_F} shows the left Fischer cover of a sofic
shift $X$ constructed using the method from the proof of Proposition
\ref{prop_range_invariant}. Note that the top and bottom vertices
should be identified, and that the labelling of the edges terminating 
at $r_1$ has been suppressed. Figure \ref{fig_ex_construction_K} shows
the left Krieger cover of $X$, but the structure of the irreducible
component corresponding to the left Fischer cover has been suppressed
to emphasize the structure of the higher layers. 
\end{example}

\begin{figure}
\begin{center}
\begin{tikzpicture}
  [bend angle=45,
   knude/.style = {circle, inner sep = 0pt},
   vertex/.style = {circle, draw, minimum size = 1 mm, inner sep =
      0pt, fill=black!10},
   textVertex/.style = {rectangle, draw, minimum size = 6 mm, inner sep =
      1pt, fill=black!10},
   to/.style = {->, shorten <= 1 pt, >=stealth', semithick}]
  
  \node[circle, draw, minimum size = 22 mm] (F) at
                           (0,0) {$(F, \LL_F)$}; 
  \node[textVertex] (vr) at (0, -0.65) {$r_1$};

  \node[textVertex] (x) at (-3,-3) {$P_\infty (a_x^\infty)$}; 
  \node[textVertex] (y) at (0,-3) {$P_\infty (a_y^\infty)$}; 
  \node[textVertex] (z) at (3,-3) {$P_\infty (a_z^\infty)$}; 

  \draw[->, shorten <= 5 pt, shorten >= 16 pt, >=stealth', semithick]
  ($(vr)+(-0.2,0)$) -> node[midway, above] {$a_{r,x}^1$ \,} ($(x)+(-0.5,0)$);
  \draw[->, shorten <= 12 pt, shorten >= 16 pt, >=stealth', semithick]
  ($(vr)+(0,0)$) -> node[midway, below] {\,$a_{r,x}^2$} ($(x)+(-0.3,0)$) ;

  \draw[->, shorten <= 10 pt, shorten >= 10 pt, >=stealth', semithick]
  ($(vr)+(-0.07,0)$) to node[auto,swap] {$a_{r,y}^1$} ($(y)+(-0.07,0)$);
  \draw[->, shorten <= 10 pt, shorten >= 10 pt, >=stealth', semithick]
  ($(vr)+(0.07,0)$) to node[auto] {$a_{r,y}^2$} ($(y)+(0.07,0)$);

  \draw[->, shorten <= 12 pt, shorten >= 15 pt, >=stealth', semithick]
  ($(vr)+(0.0,0.0)$) -> ($(z)+(0.0,0)$);
  \draw[->, shorten <=  5 pt, shorten >= 15 pt, >=stealth', semithick]
  ($(vr)+(0.2,0.0)$) -> ($(z)+(0.2,0)$);
  \draw[->, shorten <= -2 pt, shorten >= 15 pt, >=stealth', semithick]
  ($(vr)+(0.4,0.0)$) -> ($(z)+(0.4,0)$);
  \draw[->, shorten <= -9 pt, shorten >= 15 pt, >=stealth', semithick]
  ($(vr)+(0.6,0.0)$) -> 
  node[near start, right] {\, $a_{r,z}^1,a_{r,z}^2,a_{r,z}^3,a_{r,z}^4$} ($(z)+(0.6,0)$);

  \draw[->, shorten <= 23 pt, shorten >= 23 pt, >=stealth', semithick]
  ($(y)+(0,0.07)$) -> node[auto] {$a_{y,z}^1$} ($(z)+(0,0.07)$);
  \draw[->, shorten <= 23 pt, shorten >= 23 pt, >=stealth', semithick]
  ($(y)+(0,-0.07)$) -> node[auto,swap] {$a_{y,z}^2$} ($(z)+(0,-0.07)$);

  \draw[to, loop below] (x) to node[auto] {$a_x$} (x);
  \draw[to, loop below] (y) to node[auto] {$a_y$} (y);
  \draw[to, loop below] (z) to node[auto] {$a_z$} (z);

\end{tikzpicture}
\end{center}
\caption{Left Krieger cover of the shift space $X$ considered in
  Example \ref{ex_construction}. 
  The structure of the irreducible component corresponding to the left
  Fischer cover has been suppressed. 
}
\label{fig_ex_construction_K}
\end{figure}
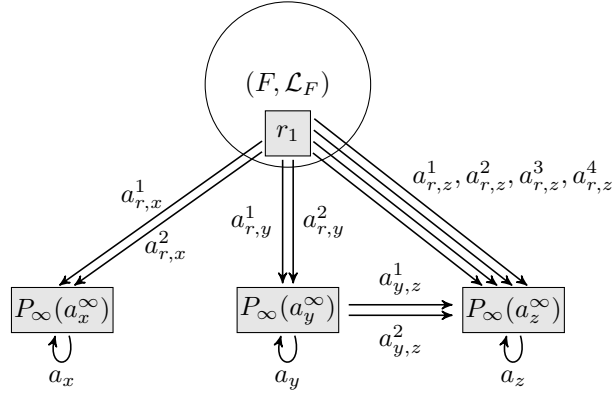  

In \cite{bates_eilers_pask} it was also remarked that an 
invariant analogous to the one discussed in Proposition
\ref{prop_range_invariant} is obtained by considering the proper
communication graph of the right Krieger  cover. The following example
shows that the two invariants may carry different information.

\begin{example}
\label{ex_2_invariants}
The labelled graph in Figure \ref{fig_2_invariants_1} is 
left-resolving, irreducible, and predecessor-separated, so it is the
left Fischer 
cover of an irreducible sofic shift. Similarly, the labelled graph in
Figure \ref{fig_2_invariants_2} is irreducible, right-resolving and
follower-separated, so it is the right Fischer cover of an irreducible
sofic shift. By considering the edges labelled $d$, it is easy to see
that the two graphs present the same sofic shift space $X$.  

Every right-ray which contains a letter different from $a$ or $a'$ is
intrinsically synchronizing, so consider a right-ray $x^+ \in 
X^+$ such that $(x^+)_i \in \{a, a'\}$ for all $i \in \N$.
By considering Figure \ref{fig_2_invariants_1}, it is clear that
$P_\infty(x^+) = P_\infty(u) \cup P_\infty(v) \cup 
P_\infty(y) = P_\infty(y)$, so $P(x^+)$ is also in the first layer of the left Krieger cover. Hence, the proper communication graph has only one vertex and no edges.

Every left-ray containing a letter different from $a$ or $a'$ is
intrinsically synchronizing, so consider the left-ray $a^\infty \in
X^-$. Figure \ref{fig_2_invariants_2} shows that $F_\infty(a^\infty)
= F_\infty(u') \cup F_\infty(v')$ and that no single 
vertex $y'$ in the right Fischer cover has $F_\infty(y') =
F_\infty(a^\infty)$, so there is a vertex in the second layer
of the right Krieger cover. In particular, the corresponding proper
communication graph is non-trivial. 
\end{example}

\begin{figure}[t!]
\begin{center}
\begin{tikzpicture}
  [bend angle=45,
   knude/.style = {circle, inner sep = 0pt},
   vertex/.style = {circle, draw, minimum size = 1 mm, inner sep =
      0pt, fill=black!10},
   textVertex/.style = {rectangle, draw, minimum size = 6 mm, inner sep =
      1pt, fill=black!10},
   to/.style = {->, shorten <= 1 pt, >=stealth', semithick}]
  
  \matrix[row sep=10mm, column sep=15mm]{
    & \node[textVertex] (u) {$u$}; &  \\
    \node[textVertex] (v) {$v$}; & \node[textVertex] (w) {$w$}; 
                             & \node[textVertex] (x) {$x$};  \\
    & \node[textVertex] (y) {$y$}; & \\
  };
  \draw[to, loop left]    (v) to node[auto] {$a'$} (v);
  \draw[to, loop above]   (u) to node[auto] {$a$} (u);
  \draw[to, loop below]   (y) to node[auto] {$a$} (y);
  \draw[to, loop above]   (v) to node[auto] {$a$} (v);
  \draw[to, loop left]    (u) to node[auto] {$a'$} (u);
  \draw[to, loop left]    (y) to node[auto] {$a'$} (y);
  \draw[to, bend right=20] (w) to node[auto,swap] {$b$} (v);
  \draw[to] (x) to node[auto,swap] {$f$} (u);
  \draw[to] (w) to node[auto,swap] {$g$} (x);
  \draw[to] (u) to node[auto,swap] {$e$} (w);
  \draw[to, bend right=20] (v) to node[auto,swap] {$c$} (w);
  \draw[to, bend left=20] (y) to node[auto] {$d$} (w);
  \draw[to] (x) to node[auto] {$f$} (y);
  \draw[to, bend left=20] (w) to node[auto] {$b$} (y);
\end{tikzpicture}
\end{center}
\caption{Left Fischer cover of the irreducible sofic shift $X$ discussed
  in Example \ref{ex_2_invariants}.} 
\label{fig_2_invariants_1}
\end{figure}
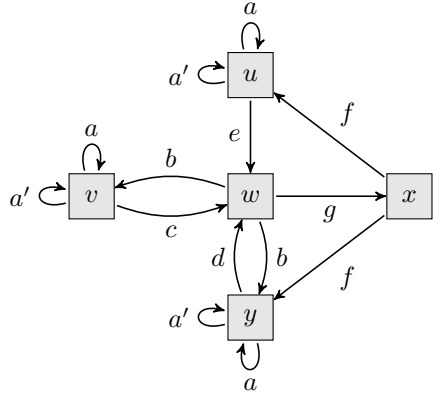
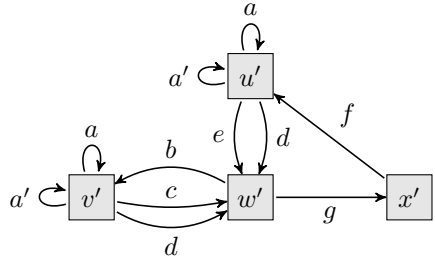
\begin{figure}[t!] 
\begin{center}
\begin{tikzpicture}
  [bend angle=45,
   knude/.style = {circle, inner sep = 0pt},
   vertex/.style = {circle, draw, minimum size = 1 mm, inner sep =
      0pt, fill=black!10},
   textVertex/.style = {rectangle, draw, minimum size = 6 mm, inner sep =
      1pt, fill=black!10},
   to/.style = {->, shorten <= 1 pt, >=stealth', semithick}]
  
  \matrix[row sep=10mm, column sep=15mm]{
    & \node[textVertex] (u') {$u'$}; &  \\
    \node[textVertex] (v') {$v'$}; & \node[textVertex] (w') {$w'$}; 
                             & \node[textVertex] (x') {$x'$};  \\
  };
  \draw[to, loop left]    (v') to node[auto] {$a'$} (v');
  \draw[to, loop above]   (u') to node[auto] {$a$} (u');
  \draw[to, loop above]   (v') to node[auto] {$a$} (v');
  \draw[to, loop left]    (u') to node[auto] {$a'$} (u');
  \draw[to, bend right=30] (w') to node[auto,swap] {$b$} (v');
  \draw[to] (x') to node[auto,swap] {$f$} (u');
  \draw[to] (w') to node[auto,swap] {$g$} (x');
  \draw[to, bend right=20] (u') to node[auto,swap] {$e$} (w');
  \draw[to, bend left=20] (u') to node[auto] {$d$} (w');
  \draw[to, bend right=30] (v') to node[auto,swap] {$d$} (w');
  \draw[to, bend right=10] (v') to node[auto] {$c$} (w');
\end{tikzpicture}
\end{center}
\caption{Right Fischer cover of the irreducible sofic shift $X$ discussed
  in Example \ref{ex_2_invariants}.}
\label{fig_2_invariants_2}
\end{figure}

\section{$\Cs$-Algebras associated to sofic shift spaces}
\label{sec_cs} 
Cuntz and Krieger \cite{cuntz_krieger} introduced a class of
$\Cs$-algebras which can naturally be viewed as the universal
$\Cs$-algebras associated to shifts of finite type.
This was generalized by Matsumoto \cite{matsumoto_1997} who associated
two $\Cs$-algebras $\OO_X$ and $\OO_{X^\ast}$ to every shift space
$X$, and these Matsumoto algebras have been studied intensely 
\cite{carlsen, katayama_matsumoto_watatani,
  matsumoto_1997, matsumoto_1998, 
  matsumoto_1999_dimension_group,matsumoto_1999_relations,
matsumoto_1999_simple,
  matsumoto_2000_automorphisms,matsumoto_2000_stabilized,
  matsumoto_2001, matsumoto_2002, matsumoto_watatani_yoshida}.
The two Matsumoto algebras $\OO_X$ and $\OO_{X^\ast}$ are generated
by elements satisfying the same relations, but they are not isomorphic
in general \cite{carlsen_matsumoto}. 
This paper will follow the approach of Carlsen in \cite{carlsen_2008}
where a universal $\Cs$-algebra $\OO_{\tilde X}$ is associated to every
one-sided shift space $\tilde X$. 
This 
also gives a way to associate $\Cs$-algebras to every two-sided
shift since a two-sided shift $X$ corresponds to two one-sided 
shifts $X^+$ and $X^-$.

\subsubsubsection{Ideal lattices}
Let $X$ be a sofic shift space and let $\OO_{X^+}$ be the universal
$\Cs$-algebra associated to the one-sided shift $X^+$ as defined in
\cite{carlsen_2008}. 
Carlsen proved that $\OO_{X^+}$ is isomorphic to the Cuntz-Krieger
algebra of the left Krieger cover of $X$ \cite{carlsen}, so the
lattice of gauge invariant ideals in $\OO_{X^+}$ is given by the
proper communication graph of the left Krieger cover of $X$
\cite{bates_pask_raeburn_szymanski,kumjian_pask_raeburn_renault}, and all ideals are given in this way if the left Krieger cover satisfies Condition (K) \cite[Theorem 4.9]{raeburn}.
Hence, Proposition \ref{prop_layers} and Theorem \ref{thm_gfc_foundation_lkc} can be used to investigate the ideal
lattice of $\OO_{X^+}$. For a reducible sofic shift, a part of the ideal lattice is given by the structure of the generalized left Fischer cover, which is reducible, but if $X$ is an irreducible sofic shift, and the left Krieger cover of
$X$ satisfies Condition (K) then the fact that the left Krieger cover
has a unique top component implies that $\OO_{X^+}$ will always have
a unique maximal ideal.
The following proposition shows that all these lattices can be
realized.   

\begin{prop}
\label{prop_ideal_lattice_of_irreducible_shift}
Any finite lattice of ideals with a unique maximal ideal is the ideal
lattice of the universal $\Cs$-algebra $\OO_{X^+}$ associated to an
AFT shift $X$.   
\end{prop}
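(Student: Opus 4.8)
The plan is to turn the statement into a purely combinatorial assertion about finite directed graphs and then feed it into (a mild variant of) Proposition~\ref{prop_range_invariant}. Recall from the start of this subsection that $\OO_{X^+}$ is isomorphic to the Cuntz--Krieger algebra $\Cs(K)$ of the left Krieger cover $(K,\LL_K)$ of $X$, and that, when $K$ satisfies Condition (K), every ideal of $\Cs(K)$ is gauge invariant, so the ideal lattice of $\OO_{X^+}$ is the lattice of saturated hereditary subsets of $K^0$, ordered by inclusion. The first point I would establish is that if, in addition, every vertex of $K$ lies on a circuit -- equivalently, every proper communication set of $K$ is non-trivial -- then these saturated hereditary subsets are exactly the unions of proper communication sets that are closed under following edges forward, i.e.\ the down-sets of the proper communication graph $PC(K)$. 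Indeed, a hereditary subset must swallow a whole proper communication set as soon as it meets it, so it is such a union; and any such union is automatically saturated, because, every proper communication set being non-trivial, each vertex emits an edge into its own proper communication set, and hence the saturation condition is never triggered at a vertex outside the union. So for such $K$ the ideal lattice of $\OO_{X^+}$ is isomorphic to the lattice of down-sets of $PC(K)$.

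Next I would reduce the prescribed lattice to a poset. Since $L$ is the ideal lattice of a $\Cs$-algebra it is distributive, so Birkhoff's representation theorem identifies $L$ with the lattice of down-sets (order ideals) of the poset $P$ of join-irreducible elements of $L$. The coatoms of this lattice are the sets $P\setminus\{m\}$ with $m$ maximal in $P$; thus the hypothesis that $L$ has a unique maximal ideal says precisely that $P$ has a unique maximal element, and since $P$ is finite that element is a maximum $\hat 1$ of $P$. Let $E$ be the directed graph with vertex set $P$ and an edge from $x$ to $y$ whenever $y<x$ in $P$. Then $E$ is finite, contains no circuit, has $\hat 1$ as a root, and is transitively closed, and its down-sets are exactly the order ideals of $P$, so the lattice of down-sets of $E$ is isomorphic to $L$.

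Now I would run the construction from the proof of Proposition~\ref{prop_range_invariant} on this $E$, with one change: at each vertex $v_i$ I draw \emph{two} loops, carrying two fresh distinct labels, in place of the single loop labelled $a_v$. One checks that the resulting labelled graph $(F,\LL_F)$ is still irreducible, left-resolving, right-resolving, and predecessor-separated -- the predecessor sets of the vertices and the minimal numbers $n(v)$ are unchanged, since each $v_i$ is still reached from $r_1$ by a uniquely labelled path -- so $(F,\LL_F)$ is again the left Fischer cover of an AFT shift $X$, and the identification of the left Krieger cover $(K,\LL_K)$ and of $PC(K)$ goes through unchanged, giving $PC(K)\cong E$ (here $\tilde E=E$ because $E$ is already transitively closed). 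The gain is that now every vertex of $K$ that lies on a circuit lies on at least two: the vertices $P_\infty(a_v^\infty)$ carry two loops and emit nothing else, and every vertex of $F$ carries two loops. Hence $K$ satisfies Condition (K) and every vertex of $K$ lies on a circuit, so by the first paragraph the ideal lattice of $\OO_{X^+}$ is the lattice of down-sets of $PC(K)\cong E$, which is isomorphic to $L$.

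The main obstacle is exactly this Condition (K) issue. The graph produced by Proposition~\ref{prop_range_invariant} as it stands has vertices supporting a single loop and no other return path, so $\Cs(K)$ has sub-quotients Morita equivalent to $C(\mathbb{T})$ and an ideal lattice that is not even finite; the real work is to perturb the construction so as to restore Condition (K) without disturbing the proper communication graph, the layer structure, or the AFT property, and then to verify the book-keeping that identifies saturated hereditary subsets of $K^0$ with down-sets of $PC(K)$.
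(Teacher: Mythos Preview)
Your proposal is correct and follows essentially the same route as the paper: modify the construction from Proposition~\ref{prop_range_invariant} by placing two loops (with labels $a_v$ and $a_v'$) at each $v_i$ instead of one, so that the resulting left Krieger cover satisfies Condition~(K) while the proper communication graph remains $E$. You supply more detail than the paper does---the explicit Birkhoff reduction from the prescribed lattice $L$ to a transitively closed acyclic graph $E$ with a root, and the verification that saturated hereditary subsets of $K^0$ coincide with down-sets of $PC(K)$ once every vertex lies on a circuit---but the core idea and the construction are identical.
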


\begin{proof}
Let $E$ be a finite directed graph whitout circuits and with a unique
maximal vertex.
Consider the following slight modification of the algorithm from the
proof of Proposition \ref{prop_range_invariant}. 
For each $v \in E$, draw two loops of length 1 at each vertex $v_1,
\ldots, v_{n(v)}$ associated to $v$: 
One labelled $a_v$ and one labelled $a_v'$. The rest of the
construction is as before.
Let $(K, \LL_K)$ be the left Krieger cover of the corresponding
sofic shift. As before, the proper communication graph of $K$ is given by $E$, and now $(K, \LL_K)$
satisfies Condition (K),
so there is a bijective correspondence between the hereditary
subsets of $E^0$ and the ideals of $\Cs(K) \cong \OO_{X^+}$.
Since $E$ was arbitrary, any finite ideal lattice with a unique maximal ideal can be obtained in this way.  
\end{proof}

\subsubsubsection{The $\Cs$-algebras $\OO_{X^+}$ and $\OO_{X^-}$}
Every two-sided shift space $X$ corresponds to two one-sided shift
spaces $X^+$ and $X^-$, and this gives two natural ways to associate a
universal $\Cs$-algebra to $X$.
The next goal is to show that these two $\Cs$-algebras may carry different information about the shift space.
Let $\OO_{X^-}$ be the universal $\Cs$-algebra associated to the
one-sided shift space $(X^\textrm{T})^+$ as defined in
\cite{carlsen_2008}. The left Krieger cover of $X^\textrm{T}$ is the
transpose of the right Krieger cover of $X$, so by \cite{carlsen},
$\OO_{X^-}$ is isomorphic to the Cuntz-Krieger algebra of the
transpose of the right Krieger cover of $X$.

\begin{example}
\label{ex_+-_not_isomorphic}
Let $X$ be the sofic shift from Example \ref{ex_2_invariants}. Note
that the left and right Krieger covers of $X$ both satisfy Condition
(K) from \cite{raeburn}, so the corresponding proper communication graphs
completely determine the ideal lattices of $\OO_{X^+}$ and $\OO_{X^-}$.   
The proper communication graph of the left Krieger cover $(K, \LL_K)$ of $X$ is
trivial, so $\OO_{X^+}$ is simple, while there are precisely two
vertices in the proper communication graph of the right Krieger
cover of $X$, so there is exactly one non-trivial ideal in $\OO_{X^-}$.
In particular, $\OO_{X^+}$ and $\OO_{X^-}$ are not isomorphic.

Consider the edge shift $Y = \X_{K}$. This is an SFT, and the left and
right Krieger covers of $Y$ are both $(K, \LL_{\Id})$,  where
$\LL_{\Id}$ is the identity map on the edge set $K^1$. By \cite{carlsen},
$\OO_{X^+}$ and  $\OO_{Y^+}$ are isomophic to $\Cs (K)$.
Similarly, $\OO_{Y^-}$ is isomorphic to $\Cs (K^\textrm T)$, and
$K^\textrm T$ is an irreducible graph satisfying Condition (K), so
$\OO_{Y^-}$ is simple.
In particular, $\OO_{Y^-}$ is not isomophic to $\OO_{X^-}$.  
This shows that the $\Cs$-algebras
associated to $X^+$ and $X^-$ are not always isomorphic, and that
there can exist a shift space $Y$ such that $\OO_{Y^+}$ is
isomorphic to $\OO_{X^+}$ while $\OO_{Y^-}$ is not isomorphic to
$\OO_{X^-}$. 
\end{example}

\subsubsubsection{An investigation of Condition ($\ast$)}
In \cite{carlsen_matsumoto}, two $\Cs$-algebras $\OO_X$ and
$\OO_{X^\ast}$ are associated to every two-sided shift space $X$.
The $\Cs$-algebras $\OO_{X}$, $\OO_{X^\ast}$, and $\OO_{X^+}$  
are generated by partial isometries satisfying the same
relations, but $\OO_{X^+}$ is always universal 
unlike $\OO_{X}$ \cite{carlsen_2008}.  
In \cite{carlsen_matsumoto}, it is proved that
$\OO_X$ and $\OO_{X^*}$ are isomorphic when $X$ satisifies a
condition called Condition ($\ast$). 
The example from \cite[Section 4]{carlsen_matsumoto} shows that not
all sofic shift spaces satisfy this condition by constructing a
sofic shift where the left Krieger cover and the past set cover are
not isomorphic. 
The relationship between Condition ($\ast$) and the structure of the left
Krieger cover and the past set cover
is further clarified by the final main result.
For each $l \in \N$ and $w \in \BB(X)$ define 
$P_l(w) = \{ v \in \BB(X) \mid vw \in \BB(X), |v| \leq l \}$. Two
words $v, w \in \BB(X)$ are said to be $l$-past equivalent if $P_l(v)
= P_l(w)$. For $x^+ \in X^+$, $P_l(x^+)$ and $l$-past equivalence are
defined analogously.  

\begin{cond}[$\ast$]
For every $l \in \N$ and every infinite $F \subseteq \BB(X)$ such
that $P_l(u) = P_l(v)$ for all $u,v \in F$ there exists $x^+ \in X^+$ such
that $P_l(w) = P_l(x^+)$ for all $w \in F$.
\end{cond}

\begin{lem}
\label{lem_infinitely_many_w}
A vertex $P$ in the past set cover of a sofic shift $X$ is in an essential subgraph
if and only if there exist infinitely many $w \in \BB(X)$ such that
$P_\infty(w) = P$. 
\end{lem}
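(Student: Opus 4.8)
The plan is to prove both implications directly from the definition of the past set cover and the characterization of being in an essential subgraph, using Lemma \ref{lem_prefix_with_same_past} as the key tool for passing between words and right-rays. Recall that a vertex $P$ lies in an essential subgraph if and only if $P$ emits and receives edges arbitrarily far, i.e. for every $n \in \N$ there is a path of length $n$ terminating at $P$ and a path of length $n$ starting at $P$; equivalently (since the graph is finite), $P$ lies on a bi-infinite path, which for the past set cover means there is some $y^-x^+ \in X$ with $P_\infty(x_1\cdots x_k) = P$ for suitable $k$. So the heart of the matter is to relate the condition ``$P$ lies on a bi-infinite path in $\psc$'' to the condition ``infinitely many words $w$ have $P_\infty(w) = P$''.

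First I would prove the easy direction: suppose there are infinitely many $w \in \BB(X)$ with $P_\infty(w) = P$. Since the alphabet is finite, among these infinitely many words there are words of unbounded length, so for each $n$ pick $w$ with $|w| \geq n$ and $P_\infty(w) = P$; writing $w = w' w''$ with $|w''| = n$, the word $w'$ witnesses a path of length $|w'|$ in $\psc$ ending at $P_\infty(w'')$, and reading $w$ itself gives a path of length $|w| \geq n$ terminating at $P_\infty(w) = P$ (using that $P_\infty$ of every suffix-extension sits on the path spelled by $w$). Thus $P$ receives paths of every length. For the outgoing direction, any $w$ with $P_\infty(w) = P$ extends to the right within $X$ to a right-ray $wx^+$, and the path in $\psc$ spelling out $x^+$ from $P$ shows $P$ emits paths of every length. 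Hence $P$ is in an essential subgraph.

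For the converse, suppose $P$ lies in an essential subgraph. Then for each $n$ there is a path of length $n$ in $\psc$ terminating at $P$, and continuing backwards and forwards we obtain $y^- x^+ \in X$ such that $P$ appears as $P_\infty(\text{some prefix of } x^+)$ in the natural path spelling $y^- x^+$ through the cover. The point is to extract, for each $n$, a word $w_n \in \BB(X)$ with $|w_n| \geq n$ and $P_\infty(w_n) = P$. Here is where Lemma \ref{lem_prefix_with_same_past} does the work in the form: a path of length $n$ ending at $P$ spells a word $u$ of length $n$ with $u \, \BB$-extendable, and there is $v$ with $P_\infty(uv) = P$ — more carefully, since $P$ is on a bi-infinite path, choose $x^+ \in X^+$ such that the path spelling $x^+$ from $P$ exists and $P = P_\infty(x^+)$ (using that $P$ is a vertex of $\psc$, so $P = P_\infty(w_0)$ for some $w_0$, and any right-extension $w_0 t^+$ of $w_0$ inside $X$ has $P_\infty(w_0 t^+) \subseteq P_\infty(w_0) = P$; by Lemma \ref{lem_prefix_with_same_past} applied carefully one sees equality can be arranged, but actually the cleanest route is different). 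The cleanest route: take the length-$n$ incoming path, which spells $u_n$ with $|u_n| = n$ and terminal vertex $P$, so by construction of $\psc$ there is $w \in \BB(X)$ with $P = P_\infty(w)$ reached by reading $u_n$, giving $P_\infty(u_n w) = P$ and $|u_n w| \geq n$. Letting $n \to \infty$ produces words $u_n w$ of unbounded length all with predecessor set $P$; since there are only finitely many words of each length but these have unbounded length, there are infinitely many of them, which is exactly the claim.

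The main obstacle I anticipate is the bookkeeping in the converse: turning the abstract statement ``$P$ is in an essential subgraph'' into a concrete family of words of unbounded length with predecessor set exactly $P$ (not merely contained in $P$). The containment $P_\infty(uw) \subseteq P_\infty(w)$ is automatic, so one must exploit the edge relation in $\psc$ — namely that an edge labelled $a$ from $Q$ to $Q'$ means $Q = P_\infty(aw')$ for some $w'$ with $Q' = P_\infty(w')$ — to guarantee that following a length-$n$ path \emph{into} $P$ from a genuine vertex of $\psc$ lands on a word whose predecessor set is \emph{equal to} $P$. This is precisely what the definition of the edges of $\psc$ gives when read along a path terminating at $P$: the source of such a path, together with the spelled word appended to the witnessing word at $P$, has predecessor set equal to the source vertex, and in particular the word witnessing $P$ extended on the left by the path's label has predecessor set $P$. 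Making this chain of equalities precise, and checking that Lemma \ref{lem_prefix_with_same_past} is (or is not) actually needed, is the only delicate point; everything else is routine.
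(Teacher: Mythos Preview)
Your overall strategy is right, but you have the edge direction in the past set cover backwards, and this breaks both implications as written.

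By definition, an edge labelled $a$ in $(\psc,\LL_\psc)$ goes \emph{from} $P_\infty(aw)$ \emph{to} $P_\infty(w)$. Hence if $P = P_\infty(w)$ with $w = w_1 \cdots w_k$, the path spelling $w_1 \cdots w_{k-1}$ \emph{starts} at $P$ and ends at $P_\infty(w_k)$; it does not terminate at $P$. So in your ``easy'' direction the infinitely many words produce arbitrarily long \emph{outgoing} paths from $P$, not incoming ones. The incoming side must be handled separately, for instance by observing that the past set cover has no sources (every word extends to the left); this is what the paper does.

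The same reversal breaks your ``cleanest route'' for the converse. If $u_n$ labels a path of length $n$ \emph{terminating} at $P$ and $P = P_\infty(w)$, then (using that $(\psc,\LL_\psc)$ is left-resolving) one obtains $P_\infty(u_n w) = Q_0$, the \emph{source} of that path, not $P$. The words $u_n w$ therefore witness $Q_0$, which varies with $n$, and say nothing about $P$.

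The fix is to use outgoing paths instead: since $P$ lies in an essential subgraph, there is a right-ray $x^+$ with a presentation in $(\psc,\LL_\psc)$ starting at $P$. The vertex reached after $n$ steps is $P_\infty(w_n)$ for some $w_n \in \BB(X)$, and since $P_\infty(av) = P_\infty(av')$ whenever $P_\infty(v) = P_\infty(v')$, tracing back along the path gives $P = P_\infty(x_1 \cdots x_n w_n)$. These are words of length at least $n$ with predecessor set exactly $P$, hence infinitely many. This is precisely the paper's argument; once the direction is corrected your plan collapses into it, and Lemma~\ref{lem_prefix_with_same_past} is not needed.
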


\begin{proof}
Let $P$ be a vertex in an essential subgraph of the past set cover of
$X$, and let $x^+ \in  X$ be a right ray with a presentation starting at
$P$. Given $n \in \N$, there exists $w_n \in \BB(X)$ such that
$P = P_\infty(x_1 x_2 \ldots x_n w_n)$. To prove the converse, 
let $P$ be a vertex in the past set cover for which there exist infinitely many $w \in \BB(X)$ such that
$P = P_\infty(w)$. For each $w$, there is a path labelled $w_{[1,\rvert w \lvert -1]}$ starting at $P$. There are no sources in the past set cover, so this implies that $P$ is not stranded.
\end{proof}

\begin{prop}
\label{prop_condition_star}
A sofic shift $X$ satisfies Condition ($\ast$) if and only if the
left Krieger cover is the maximal essential subgraph of the past set
cover. 
\end{prop}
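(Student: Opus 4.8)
The plan is to recast the statement as an identity of vertex sets inside the past set cover and then verify the two implications separately. Recall that the left Krieger cover is identified with a subgraph of $(\psc,\LL_\psc)$ (Theorem \ref{thm_foundations_psc}); concretely, using Lemma \ref{lem_prefix_with_same_past} one has $K^0=\{P_\infty(x^+)\mid x^+\in X^+\}\subseteq\psc^0=\{P_\infty(w)\mid w\in\BB(X)\}$, and $K$ is the subgraph of $\psc$ induced by $K^0$. On the other hand the maximal essential subgraph of $(\psc,\LL_\psc)$ is the subgraph induced by the set of vertices that belong to some essential subgraph, which by Lemma \ref{lem_infinitely_many_w} equals $M:=\{P\in\psc^0\mid P=P_\infty(w)\text{ for infinitely many }w\in\BB(X)\}$. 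Since both $K$ and this maximal essential subgraph are induced subgraphs of $\psc$, the assertion to be proved is exactly $K^0=M$. The inclusion $K^0\subseteq M$ is automatic: if $P=P_\infty(x^+)$, Lemma \ref{lem_prefix_with_same_past} gives $n$ with $P_\infty(x_1\cdots x_k)=P$ for all $k\ge n$, so infinitely many words have predecessor set $P$. Thus everything reduces to: $M\subseteq K^0$ if and only if $X$ satisfies Condition $(\ast)$. I would also record at the outset the elementary fact that, for $y\in\BB(X)\cup X^+$, the set $P_l(y)$ consists precisely of the words of length $\le l$ that occur as suffixes of elements of $P_\infty(y)$; hence $P_l(y)$ depends only on $P_\infty(y)$, and $P_\infty(y)$ is recovered from the ascending family $(P_l(y))_{l\in\N}$, so that $P_\infty(y_1)=P_\infty(y_2)$ exactly when $P_l(y_1)=P_l(y_2)$ for every $l$.

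For the "only if" direction, assume $K^0=M$ and take $l\in\N$ together with an infinite $F\subseteq\BB(X)$ on which $P_l$ is constant. As $\psc^0$ is finite, some fibre $F'=\{w\in F\mid P_\infty(w)=P\}$ is infinite; then $P\in M=K^0$, so $P=P_\infty(x^+)$ for some $x^+\in X^+$. For every $w\in F$ and $w'\in F'$ one gets $P_l(w)=P_l(w')=P_l(x^+)$, the second equality because $P_\infty(w')=P=P_\infty(x^+)$ and $P_l$ depends only on $P_\infty$; hence $x^+$ witnesses Condition $(\ast)$ for this $l$ and $F$.

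For the "if" direction, assume Condition $(\ast)$ and let $P\in M$, so $F:=\{w\in\BB(X)\mid P_\infty(w)=P\}$ is infinite and $P_l$ is constant on $F$ for every $l$; fix $w_0\in F$. For each $l$, Condition $(\ast)$ produces $x^+_l\in X^+$ with $P_l(x^+_l)=P_l(w_0)$. Since $K^0$ is finite, there is a single $R\in K^0$ with $P_\infty(x^+_l)=R$ for all $l$ in some infinite set $L\subseteq\N$. For $l\in L$ the set $P_l(x^+_l)$ depends only on $R$, so the words of length $\le l$ that are suffixes of elements of $R$ agree with those that are suffixes of elements of $P$; as this holds for arbitrarily large $l$, the sets $R$ and $P$ have the same finite suffixes, hence $R=P$ by the recovery fact, and so $P=R\in K^0$. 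This proves $M\subseteq K^0$, completing the argument.

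The step I expect to carry the real content is the "if" direction: Condition $(\ast)$ only furnishes, separately for each $l$, a right-ray that is $l$-past equivalent to the words of $F$, and there is no reason these right-rays stabilise. The trick is to pass to an infinite set of levels $l$ on which $P_\infty(x^+_l)$ is constant — legitimate precisely because the left Krieger cover is finite — and then use that a predecessor set is determined by its finite suffixes to promote the level-by-level agreement to an identity of full predecessor sets. Everything else — the identification of both $K$ and the maximal essential subgraph with induced subgraphs of $\psc$ having vertex sets $K^0$ and $M$ — is routine bookkeeping once Lemmas \ref{lem_prefix_with_same_past} and \ref{lem_infinitely_many_w} are available.
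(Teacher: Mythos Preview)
Your proof is correct and follows the same overall architecture as the paper's: both reduce the statement to the equality $K^0=M$ inside $\psc^0$, observe that $K^0\subseteq M$ is automatic, and handle the two implications separately, with the ``only if'' direction being essentially identical.

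The one genuine difference is in the ``if'' direction. The paper first fixes a single level $m$ with the property that $P_m$-equivalence coincides with $P_\infty$-equivalence on $\BB(X)\cup X^+$ (a direct consequence of there being only finitely many predecessor sets), and then a \emph{single} application of Condition~$(\ast)$ at level $m$ immediately yields an $x^+$ with $P_\infty(x^+)=P$. You instead apply Condition~$(\ast)$ at every level to get a sequence $(x^+_l)$, pigeonhole on the finite set $K^0$ to extract a constant predecessor set $R$ along an infinite set of levels, and then argue $R=P$ from agreement of finite suffixes at unbounded lengths. Both arguments are valid; the paper's is shorter and more direct once the uniform $m$ is in hand, while yours trades that lemma for a pigeonhole step and the (easy) fact that $P_\infty$ is determined by the family $(P_l)_l$.
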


\begin{proof}
Assume that $X$ satisfies Condition ($\ast$). Let $P$ be a vertex in an
essential subgraph of the past set cover and define $F = \{ w \in
\BB(X) \mid P_\infty(w) = P \}$. 
Choose $m \in \N$ such that for all $x,y
\in \BB(X) \cup X^+$, $P_\infty(x) = P_\infty(y)$ if and only if
$P_m(x) = P_m(y)$. 
By Lemma \ref{lem_infinitely_many_w}, $F$ is an infinite set, so
Condition ($\ast$) can be used to choose $x^+ \in X^+$ such that
$P_m(x^+) = P_m(w)$ for all $w \in F$. By the choice of $m$, this means
that $P_\infty(x^+) =  P_\infty(w) = P$ for all $w \in
F$, so $P$ is a vertex in the left Krieger cover.  

To prove the other implication, assume that the left Krieger cover is
the maximal essential subgraph of the past set cover. Let $l \in \N$
be given, and consider an infinite set $F \subseteq \BB(X)$ for which
$P_l(u) = P_l(v)$ for all $u,v \in F$. Since $X$ is sofic, there are
only finitely many different predecessor sets, so there must exist $w
\in F$ such that $P_\infty(w) = P_\infty(v)$ for infinitely many $v
\in F$. By Lemma \ref{lem_infinitely_many_w}, this proves that $P =
P_\infty(w)$ is a vertex in the maximal essential subgraph of the past
set cover. By assumption, this means that it is a vertex in the left
Krieger cover, so there exists $x^+ \in X^+$ such that $P_\infty(w) =
P_\infty(x^+)$. In particular, $P_l(x^+) = P_l(w) = P_l(v)$ for all $v
\in F$, so Condition ($\ast$) is satisfied.
\end{proof}

\noindent
In \cite{bates_pask} it was proved that $\OO_{X^\ast}$ is isomorphic
to the Cuntz-Krieger algebra of the past set cover of $X$ when $X$
satisfies a condition called Condition (I).
According to Carlsen \cite{carlsen_privat}, a proof similar to the
proof which shows that $\OO_{X^+}$ is isomorphic to the Cuntz-Krieger
algebra of the left Krieger cover of $X$
should prove
that $\OO_{X^\ast}$ is isomorphic to the
Cuntz-Krieger algebra of the subgraph of the past set cover of $X$
induced by the vertices $P$ for which there exist
infinitely many words $w$ such that $P_\infty(w) = P$.
Using Lemma \ref{lem_infinitely_many_w}, this shows that
$\OO_{X^\ast}$ is always isomorphic to the Cuntz-Krieger algebra of
the maximal essential subgraph of the past set cover of $X$.

\bibliographystyle{abbrv}

\Addresses
\end{document}